\newcommand{\mm}{\mathfrak m}
\newcommand{\nn}{\mathfrak n}
\newcommand{\pp}{\mathfrak p}
\newcommand{\qq}{\mathfrak q}
\newcommand{\A}{\mathbb{A}}
\newcommand{\Z}{\mathbb{Z}}
\newcommand{\R}{\mathbb{R}}
\newcommand{\N}{\mathbb{N}}
\newcommand{\Q}{\mathbb{Q}}
\newcommand{\Mcc}{\mathcal{M}}
\DeclareMathOperator{\pnt}{\raise 0.5mm \hbox{\large\bf.}}
\DeclareMathOperator{\cx}{cx}
\DeclareMathOperator{\curv}{curv}
\DeclareMathOperator{\Span}{Span}
\DeclareMathOperator{\Spec}{Spec}
\DeclareMathOperator{\supp}{supp}
\DeclareMathOperator{\chara}{char}
\DeclareMathOperator{\Tor}{Tor}
\DeclareMathOperator{\Ker}{Ker}
\DeclareMathOperator{\projdim}{proj\,dim}
\def\+#1{\relax\ifmmode\if\noexpand #1\relax \mathop{\kern
    0pt^+{#1}}\nolimits\else \kern 0pt^+\!#1 \fi\else$^*$#1\fi}
\let\phi=\varphi
\newtheorem{thm}{\bf Theorem}[section]
\newtheorem{lem}[thm]{\bf Lemma}
\newtheorem{cor}[thm]{\bf Corollary}
\newtheorem{prop}[thm]{\bf Proposition}
\newtheorem{conj}[thm]{\bf Conjecture}
\theoremstyle{definition}
\newtheorem{defn}[thm]{\bf Definition}
\newtheorem{rem}[thm]{\bf Remark}
\newtheorem{ex}[thm]{\bf Example}
\theoremstyle{plain}
\newtheorem*{thm*}{Theorem}
\newtheorem*{claim*}{Claim}
\title{Algebra retracts and Stanley-Reisner rings}
\author{Neil Epstein}
\address{Department of Mathematical Sciences \\ George Mason University \\ Fairfax, VA 22030}
\email{nepstei2@gmu.edu}
\author{Hop D. Nguyen}
\address{Am Herrenberge 11, Appartment 419 \\ 07745 Jena, Germany}
\address{Dipartimento di Matematica, Universit\`a di Genova, Via Dodecaneso 35, 16146 Genoa, Italy}
\email{ngdhop@gmail.com}
\date{\today}
\thanks{The second named author was supported by the graduate school ``Combinatorial structures in Algebra and Topology" at the University of Osnabr\"uck, Germany.}
\keywords{Toric face ring, algebra retract, Stanley-Reisner ring}
\subjclass[2010]{13F55, 13B99, 14M25, 05E40}
\begin{document}

\begin{abstract}
In a paper from 2002, Bruns and Gubeladze conjectured that graded algebra retracts of polytopal algebras over a field $k$ are again polytopal algebras. Motivated by this conjecture, we prove that graded algebra retracts of Stanley-Reisner rings over a field $k$ are again Stanley-Reisner rings. Extending this result further, we give partial evidence for a conjecture saying that monomial quotients of standard graded polynomial rings over $k$  descend along graded algebra retracts. 
\end{abstract}

\maketitle

\section{Introduction}
\label{intro}
Let $\theta: R\to S$ be a ring extension that admits a section $\phi: S\to R$, so that $\phi\circ \theta=\text{id}_R$. We call $\theta$ an algebra retract with retraction map $\phi$. In this situation, $R$ is said to be an algebra retract of $S$. Equivalently, $R$ is an algebra retract of $S$ if there is an idempotent ring endomorphism of $S$ whose image is $R$. If $R$ and $S$ are graded rings, we assume
that the morphisms involved preserve the gradings. Examples of algebra retracts include the polynomial extensions $R\to R[x]$ or the tensor product
extensions $R\to R\otimes_k S$ where $R, S$ are affine algebras over a field $k$.

Several authors have considered algebra retracts to various ends; see \cite{Cos}, \cite{Her}, \cite{BG02}. For example, the study of Zariski's
cancellation problem is closely related to the study of algebra retracts.  Zariski's cancellation problem asks: let $R$ be a  $k$-algebra, where $k$ is a field, such that $R[x] \cong k[x_1,\ldots,x_n]$ for some $n\ge 1$.  Is it true then that $R\cong k[x_1,\ldots,x_{n-1}]$? Costa \cite{Cos} asked a
stronger question: if $R$ is an algebra retract of a polynomial ring over $k$, is it true that $R$ is isomorphic to a polynomial ring (over $k$)? He
proved that this is indeed the case for retracts of a polynomial ring in $2$ variables \cite[Thm.~3.5]{Cos}. Recently, N. Gupta \cite{Gupta}
provided a counterexample for Zariski's cancellation problem in $3$ variables.

A well-known result of Hochster and Huneke  \cite[Thm.~2.3]{HH} says that a pure subring of an equal characteristic regular ring is Cohen-Macaulay. We
know from \cite{HO} (resp. \cite{Yanagi2}) that pure subrings of normal (seminormal) rings are normal (resp.~ seminormal). However, compared
to pure subrings, algebra retracts of a ring enjoy much better properties. It is proved in \cite{Cos} that algebra retracts of a regular ring are regular. We give a homological proof of this result in Section \ref{ascent_and_descent}, and prove similar results for local complete intersections, applying ideas of Herzog \cite{Her} and Apassov \cite{Apa}. On the other hand, we give an example showing that the Cohen-Macaulay  and Gorenstein properties are not stable under algebra retracts, see Example \ref{nondescent_Gorenstein}.

In the 2000s, Bruns and Gubeladze proposed many ``polytopal" extensions of classical results in linear algebra \cite{BG99},
\cite{BG99a}, \cite{BG02}. For example, we have the description of the ``polytopal linear group", i.e., the graded automorphism group of a polytopal algebra, which in many ways resembles the general linear group over a field $k$. The general linear group of invertible $n \times n$ matrices over $k$ is exactly the polytopal linear group of the $(n-1)$-simplex; see \cite{BG99} for details. For more discussions of polytopal algebras, see \cite{BGT}. An interesting problem remains open in this program relating to algebra retracts of polytopal algebras:
\begin{conj}[{\cite[Conj.~A]{BG02}}]
\label{polytopal_conjecture}
Every graded algebra retract of a polytopal algebra over $k$ is again a polytopal algebra over $k$.
\end{conj}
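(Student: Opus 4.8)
The plan is to separate the properties that $R$ inherits automatically from the retract from the single point on which the whole conjecture turns --- equipping $R$ with a torus action matching a lattice polytope. Write $\theta\colon R\hookrightarrow k[P]$ for the retract and $\phi\colon k[P]\twoheadrightarrow R$ for the retraction, so $\phi\circ\theta=\mathrm{id}_R$. The cheap reductions come first: $\phi$ is a surjective graded $k$-algebra map out of a standard graded ring, so $R$ is standard graded; $\theta$ embeds $R$ as a graded subring of the domain $k[P]$, so $R$ is a graded domain; $R$ is a pure subring of $k[P]$ (being a retract), so every property of $k[P]$ stable under passage to pure subrings or to retracts descends to $R$ --- for instance normality and seminormality, when $P$ is a normal, resp.\ seminormal, polytope, by \cite{HO} and \cite{Yanagi2}; and $\dim R\le\dim k[P]=\dim P+1$. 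The equidimensional case is then immediate: if $\dim R=\dim k[P]$, then $\mathrm{Ker}(\theta\circ\phi)$ lies in a minimal prime of the domain $k[P]$ and hence vanishes, so $\theta\circ\phi$ is an automorphism and $R\cong k[P]$. Thus one should induct on $\dim P$, the substance being the case $\dim R<\dim k[P]$; and normality is far from sufficient on its own, since the homogeneous coordinate ring of a smooth quadric threefold --- or of an elliptic normal curve --- is a normal standard graded domain that is not polytopal, so the retraction must enter the argument essentially.

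The core step I propose is to analyze the monoid $E$ of graded $k$-algebra endomorphisms of $k[P]$ well enough to classify its idempotents up to conjugation by the polytopal linear group $\Gamma(P)$ of \cite{BG99}. Concretely, $\phi$ is determined by the degree-one elements $\phi(x_a)$, $a\in P\cap\Z^n$, where $x_a$ denotes the degree-one monomial at the lattice point $a$; after the identification $R=\theta(R)\subseteq k[P]$ these are linear combinations of the $x_b$, and the hope is that some element of $\Gamma(P)$ conjugates $\theta\circ\phi$ into a \emph{monomial} idempotent, that is, one induced by a split surjection of the affine monoid $S_P$ underlying $k[P]$. Granting this monomialization, the conjecture reduces to the purely combinatorial assertion that a monoid retract of the polytopal monoid $S_P$ is again the monoid $S_Q$ of a lattice polytope $Q$ --- a genuine condition, since the degree-one part of a standard-graded affine monoid can be strictly smaller than the lattice points of its convex hull (e.g.\ $\langle(0,1),(1,1),(3,1)\rangle$), but one I expect to be accessible by a direct combinatorial argument. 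As a consistency check, the scheme reproduces the case $P=[0,1]$, where $k[P]$ is a polynomial ring in two variables and the statement is Costa's theorem \cite[Thm.~3.5]{Cos}, as well as any case where the retraction already fixes a maximal torus or is monomial.

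The main obstacle is exactly this monomialization. In contrast to the \emph{group} of graded automorphisms of a polytopal algebra, which \cite{BG99} describes completely, the endomorphism monoid $E$ is not understood even in simple cases, and a priori a retraction can mix the torus coordinates of $k[P]$ in a way that no automorphism undoes, so there is no formal reason for the polytopal structure to transport along $\phi$. Getting past this seems to demand a new rigidity statement --- roughly, that a graded algebra retract of a toric ring must again carry a faithful torus action --- and without such a result I would expect only partial progress to be feasible: low-dimensional $P$, $P$ a simplex, or retracts onto subrings generated by monomials of $k[P]$. This is consistent with the conjecture having remained open.
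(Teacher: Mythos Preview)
The statement you were asked to prove is a \emph{Conjecture}, not a theorem, and the paper contains no proof of it. The paper quotes it from \cite{BG02} as motivation and explicitly says that ``except for some special cases, e.g.~algebra retracts of dimension $2$, the conjecture is wide open in general.'' The paper's actual contribution is the \emph{analogue} for Stanley--Reisner rings (Theorem~\ref{graded_retracts_of_SR}), whose proof goes through the primary decomposition of the Stanley--Reisner ideal and the notion of a base (Definition~\ref{defn_base}); that argument has no counterpart for polytopal algebras in the paper.

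Your write-up is not a proof but a program, and you are candid about this. The easy reductions you list (standard gradedness, domain, normality descending along pure subrings, the equidimensional case, induction on $\dim P$) are all correct and well known. The substantive step you isolate --- conjugating an arbitrary graded idempotent endomorphism of $k[P]$ by an element of the polytopal linear group $\Gamma(P)$ into a monomial idempotent --- is precisely the missing ingredient, and you correctly flag that the endomorphism \emph{monoid} of $k[P]$ is not understood even when the automorphism group is. Your closing paragraph that ``without such a result \ldots\ only partial progress'' is feasible and that this is ``consistent with the conjecture having remained open'' is the accurate assessment. So there is no gap to name beyond the one you already name yourself: the monomialization step is not a lemma anyone knows how to prove, and nothing in the paper supplies it.
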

This is a generalization of the fact that graded algebra retracts of a standard graded polynomial ring over $k$ are again polynomial rings. Equivalently: every projection of a finite dimension $k$-vector space is diagonalizable to a diagonal matrix with only $0$s and $1$s on the diagonal. Except for some special cases, e.g.~algebra retracts of dimension $2$, the conjecture is wide open in general.

On the other hand, there are many instances where results for affine monoid rings (hence in particular, results for polytopal algebras) have a counterpart for Stanley-Reisner rings. This is the starting point for our consideration of algebra retracts of Stanley-Reisner rings. In fact, we can prove that the analogue of Conjecture \ref{polytopal_conjecture} for Stanley-Reisner rings holds. The main theorem of the paper (Theorem~\ref{graded_retracts_of_SR}) is as follows:
\begin{thm*}
Every graded algebra retract of a Stanley-Reisner ring is a Stanley-Reisner ring.
\end{thm*}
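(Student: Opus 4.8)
The plan is to show that a graded algebra retract $R$ of a Stanley-Reisner ring $S = k[\Delta] = k[x_1,\ldots,x_n]/I_\Delta$ is forced to be a Stanley-Reisner ring by exploiting how the retraction interacts with the (multi)grading and the monomial structure. Let $\theta: R \to S$ with retraction $\phi: S \to R$. Since everything is standard graded, $R$ is a quotient of a polynomial ring $k[y_1,\ldots,y_m]$ by a graded ideal $J$; the degree-one piece $R_1$ is a subspace of $S_1 = \bigoplus_i k x_i$ sitting inside via $\theta$, and $\phi$ restricts to a linear projection $S_1 \to \theta(R_1)$. The first step would be to understand this linear-algebra picture: choose coordinates so that $\theta(R_1)$ is spanned by vectors $v_1,\ldots,v_m$ and $\phi$ is the projection killing a complementary subspace.

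**The key idea** is that the prime ideal structure of $S$ is combinatorial --- the minimal primes of $I_\Delta$ are the monomial primes $P_F = (x_i : i \notin F)$ for facets $F$ --- and a retract must be compatible with this. I would first treat the case where $R$ has dimension one less than $S$, i.e., the retraction is ``as small as possible,'' and show that $\theta(R_1)$ must be spanned by a subset of the variables $x_i$ together with, possibly, sums of variables that are ``parallel'' in an appropriate sense. More precisely, I expect that after a change of coordinates on $S$ (which is allowed since it does not change the isomorphism type, though one must check it preserves being a Stanley-Reisner ring --- a subtlety, since arbitrary linear changes of variable do \emph{not} preserve monomial-ness), one can assume $\theta(y_j) = x_j$ for $j = 1,\ldots,m$ and the retraction sends $x_j \mapsto y_j$ for $j \le m$ and $x_j \mapsto$ (some linear form in $y_1,\ldots,y_m$) for $j > m$. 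Then $J = \ker(S \twoheadrightarrow R)$ pulled back appropriately, and one computes $J$ by eliminating the variables $x_{m+1},\ldots,x_n$ from $I_\Delta$ together with the relations $x_j - \ell_j$. The heart of the argument is to show this elimination ideal is again generated by monomials (squarefree ones), which should follow from a careful analysis using the fact that $\phi$ is idempotent: applying $\phi$ twice to the defining relations forces strong constraints, essentially preventing the linear forms $\ell_j$ from mixing variables in a way that would destroy the monomial structure.

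**The main obstacle** I anticipate is precisely controlling these linear forms $\ell_j = \phi(x_j)$ for the ``extra'' variables: a priori $\phi(x_j)$ could be any linear form in $R_1$, and one must rule out (or accommodate) the possibility that it is a genuine sum like $y_1 + y_2$, which would correspond to a non-monomial change of coordinates. The resolution should come from looking at products: if $x_i x_j \in I_\Delta$ (a non-edge), then $0 = \phi(0) = \phi(x_i x_j) = \phi(x_i)\phi(x_j)$ in $R$, so the linear forms $\phi(x_i), \phi(x_j)$ must multiply to zero in $R$ --- and if $x_i x_j \notin I_\Delta$, then $\phi(x_i)\phi(x_j)$ is a nonzero element of $R_2$. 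Combining these constraints with the idempotency $\phi\theta = \mathrm{id}$ and an induction on $n - m$ (the number of eliminated variables, handled one at a time), I would aim to reduce everything to the case of eliminating a single variable $x_n$, where $\phi(x_n) = \ell$ is a linear form in $y_1,\ldots,y_{n-1}$, and then show directly that $R \cong k[x_1,\ldots,x_{n-1}]/(I_\Delta : \text{relations})$ is a Stanley-Reisner ring --- very plausibly by proving $\ell$ must actually be a single variable or zero (on the relevant support), so that $R = k[\Delta']$ for an explicitly described subcomplex or contraction $\Delta'$ of $\Delta$. I would also isolate as a lemma the fact that a graded $k$-algebra which is a quotient of a polynomial ring by a radical monomial-after-linear-change ideal and which has the right minimal primes is genuinely Stanley-Reisner, to cleanly package the coordinate-change subtlety.
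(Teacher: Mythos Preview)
Your proposal has a genuine gap. The hope that the linear form $\ell = \phi(x_n)$ must be a single variable or zero is false, and this breaks the induction on $n-m$. Concretely, take $S = k[x_1,x_2,x_3,x_4]/(x_1x_3,\,x_2x_4)$ and the retract $R = k[x_1,x_3]/(x_1x_3)$ with retraction $\phi$ fixing $x_1,x_3$, sending $x_4 \mapsto 0$, and sending $x_2 \mapsto x_1 + x_3$. One checks $\phi(x_2x_4)=0\in I$, so this is a legitimate retraction, yet $\phi(x_2)$ is a genuine sum. Worse, your inductive step fails: if you eliminate $x_2$ first by substituting $x_2 = x_1+x_3$, the resulting ideal in $k[x_1,x_3,x_4]$ is $(x_1x_3,\,x_1x_4 + x_3x_4)$, which is \emph{not} monomial, so the intermediate ring is not Stanley--Reisner and the induction hypothesis does not apply. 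You cannot in general choose an elimination order that keeps every intermediate ring Stanley--Reisner, and you have no argument that such an order exists.

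The paper avoids this by abandoning the attempt to control each $\phi(x_i)$ individually. Instead it introduces the notion of a \emph{base}: a subset $W\subseteq[n]$ such that $\{\phi(x_i):i\in W\}$ is linearly independent and $\phi(I)=\phi(I_W S)$, which immediately gives $R/J\cong S_W/I_W$. The existence of a base is proved by induction not on $n-m$ but on the number $s$ of associated primes in $I=\pp_1\cap\cdots\cap\pp_s$. One first reduces (dropping a redundant prime if necessary) to the case where each $\pp_i$ is \emph{compatible}, meaning $\phi(\pp_i)=\pp_i\cap R$; then a linear-algebra lemma builds the base by reverse induction over the lattice of intersections $\pp_{i_1\ldots i_t}$, choosing variables whose images form bases of the successive quotients $[\phi(\pp_{i_1\ldots i_t})]_1 / \sum [\phi(\pp_{i_1\ldots i_{t+1}})]_1$. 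The point is that one never needs the full map $\phi$ to be ``monomial-like'' --- only that a well-chosen subset of variables already captures $\phi(I)$.
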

It turns out that the graded algebra retracts of a Stanley-Reisner ring can be described concretely: they correspond to the restrictions of the underlying simplicial complex on subsets of the vertex set. We conjecture a further extension of the main theorem, namely that arbitrary monomial quotient rings also should behave nicely under graded algebra retracts. By definition, a monomial quotient ring over $k$ is a finitely generated $k$-algebra defined by monomial relations. To support this conjecture, we can prove that graded algebra retracts of certain kinds of monomial quotient rings are also monomial quotient rings; see Theorem \ref{irreducible_monomial_quotients} and Proposition
\ref{power_of_linear_ideals}.

The paper is organized as follows. In Section \ref{background_on_retracts}, preliminary materials on algebra retracts, affine monoid rings and Stanley-Reisner rings are recalled. Section \ref{ascent_and_descent} is devoted to algebraic properties of algebra retracts. It is shown that some familiar properties of rings descend along algebra retracts, among them regularity and the complete intersection property. We recall known results of \cite{HHO}, \cite{HO} on the descent of other properties along algebra retracts. At the end of this section, an example of non-descent of Cohen-Macaulayness and Gorensteinness is given. We determine multigraded algebra retracts of toric face rings in Section \ref{toric_face_rings_retracts}. The main theorem of this paper on algebra retracts of Stanley-Reisner rings is proved in Section \ref{base_of_a_retraction}. The last section considers the more general class of monomial quotient rings. The main results in Section~\ref{monomial_quotient_rings} show that
quotients of polynomial rings over $k$ by irreducible monomial ideals or powers of linear ideals are stable under graded
algebra retracts.

Parts of this paper are included in the second named author's dissertation \cite{Nguyen2}.

\section{Background}
\label{background_on_retracts}
For simplicity, all the rings considered in this paper are Noetherian and commutative with unit. A field $k$ is fixed throughout; its characteristic can be either $0$ or positive. The characteristic of the field plays a key role in the proof of Theorem  \ref{irreducible_monomial_quotients}.

The expert should feel free to skip most of the materials in this section and proceed directly to Section \ref{ascent_and_descent}. 

\subsection{Algebra retracts}
\begin{defn}
Let $R\hookrightarrow S$ be an injective ring homomorphism. We say that $R\hookrightarrow S$ is an {\em algebra retract} if there is a
ring homomorphism $\phi: S\to R$ such that the composition map $R\hookrightarrow S \to R$ is the identity. We also say that
$R$ is an algebra retract of $S$. The morphism $\phi$ is called
the {\em retraction map} of the algebra retract $R\hookrightarrow S$. We define \emph{graded} algebra retracts (of graded rings) analogously.
\end{defn}
Observe that the trivial ring $\{0\}$ is {\em never} a retract of a non-trivial ring. Thus in the sequel, we assume that all rings are non-trivial.

Algebra retracts are ubiquitous. Let $R$ be a ring and $R[x]$ the polynomial ring in one variable $x$ over $R$. Clearly the inclusion $R\to R[x]$ is an algebra retract. Another example is
\begin{ex}
\label{tensor_product}
Let $A,B$ be algebras over a field $k$ such that $B$ has an augmentation $\mu:B\to k$. Then we have a natural inclusion
\[
A\hookrightarrow A\otimes_kB, ~ a\mapsto a\otimes 1,
\]
which is an algebra retract with retraction map given by
\[
A\otimes_kB \to A, ~ a\otimes b \mapsto a\cdot \mu(b).
\]
\end{ex}

\subsection{Stanley-Reisner rings and affine monoid rings}
Let $\Delta$ be a set of subsets of $[n]=\{1,\ldots,n\}$ (where $n$ is a positive integer) such that
\begin{enumerate}
\item $\{i\}\in \Delta$ for each $i=1,\ldots,n$.
\item for each $F\in \Delta$, all subsets of $F$ also belong to $\Delta$.
\end{enumerate}
Such a set $\Delta$ is called a {\em simplicial complex} on the vertex set $[n]$.
The elements of $\Delta$ are called its {\em faces}. The maximal faces with respect to inclusion are called {\em facets} of $\Delta$.
Associate with each simplicial complex $\Delta$ a squarefree monomial ideal $I_{\Delta}\subseteq S=k[x_1,\ldots,x_n]$ where
\[
I_{\Delta}=(x^a:a\in \N^n, \supp(a)\notin \Delta),
\]
where the {\em support} $\supp(a)$ of an $n$-tuple $a \in \N^n$ is given by $\supp(a) = \{j \in [n] : a_j \neq 0\}$.
The ring $k[\Delta]=S/I_{\Delta}$ is called the {\em Stanley-Reisner ring} (or {\em face ring}) of the simplicial complex $\Delta$.
\begin{rem}
In the literature, simplicial complexes are sometimes defined without the condition (i). But for a simplicial complexe $\Delta$ on $[n]$ which satisfies only condition (ii), one can change the vertex set suitably to get back a simplicial complex satisfying condition (i) with the same Staneley-Reisner ring. Indeed, if we denote $V=\{i\in [n]: \{i\}\in \Delta\}$, then $\Delta$ satisfies both conditions (i) and (ii) on the vertex set $V$. It is straightforward to check that using the vertex set $V$, the Stanley-Reisner ring of $\Delta$ does not change. Hence the main result of this paper (Theorem \ref{graded_retracts_of_SR}) is still valid with the more flexible definition of simplicial complexes.
\end{rem}
\begin{lem}[{\cite[Thm.~5.1.4]{BH}}]
The {\em unique} irredundant primary decomposition of $I_\Delta$ is given by:
\[
I_{\Delta}=\bigcap_{F~ \text{is a facet of $\Delta$}} (x_i:i\notin F).
\]

\end{lem}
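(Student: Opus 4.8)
The plan is to verify the displayed identity directly at the level of monomials, and then to read off primality of each component together with irredundancy and uniqueness from the maximality of facets.

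First I would prove the set-theoretic equality. Both sides are monomial ideals, so it suffices to compare the sets of monomials they contain. If $\supp(a)\notin\Delta$ and $F$ is any facet, then $\supp(a)\not\subseteq F$ (since every subset of $F$ lies in $\Delta$), so we may pick $i\in\supp(a)\setminus F$, whence $x_i\mid x^a$ and $x^a\in(x_j:j\notin F)$; this gives $I_\Delta\subseteq\bigcap_F(x_j:j\notin F)$. Conversely, suppose a monomial $x^a$ lies in every $(x_j:j\notin F)$ but $\supp(a)\in\Delta$. Enlarging $\supp(a)$ to a facet $F$ (possible since $\Delta$ is finite), membership $x^a\in(x_j:j\notin F)$ forces some $j\in\supp(a)$ with $j\notin F$, contradicting $\supp(a)\subseteq F$. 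Hence the two ideals contain exactly the same monomials and are equal.

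Next, each $(x_i:i\notin F)$ is generated by a subset of the variables, hence prime, hence primary, so the identity is a primary decomposition. For irredundancy I would show that deleting the component attached to a facet $F$ strictly enlarges the intersection, using the monomial $\prod_{i\in F}x_i$: for any other facet $G$, maximality of $G$ gives $F\not\subseteq G$, so some $x_j$ with $j\in F\setminus G$ divides this monomial and it lies in $(x_\ell:\ell\notin G)$; but it is supported on $F$, so it is not in $(x_\ell:\ell\notin F)$.

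Finally, for uniqueness I would note that $I_\Delta$ is a \emph{radical} ideal, being an intersection of primes, so its irredundant primary decomposition is unique and equal to the intersection of its minimal primes. It then remains to check that the primes $(x_i:i\notin F)$, for $F$ a facet, are pairwise incomparable: an inclusion $(x_i:i\notin F)\subseteq(x_i:i\notin G)$ is equivalent to $[n]\setminus F\subseteq[n]\setminus G$, i.e. $G\subseteq F$, which by maximality forces $G=F$. Thus these are precisely the minimal primes of $I_\Delta$, and the displayed formula is its unique irredundant primary decomposition. The only mildly delicate points are keeping the ``compare monomials only'' reduction honest in the first step and making sure the incomparability argument genuinely invokes maximality of facets rather than mere membership in $\Delta$; neither poses a real obstacle.
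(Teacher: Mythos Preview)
The paper does not actually prove this lemma; it merely states the result and cites \cite[Thm.~5.1.4]{BH}. Your direct monomial-by-monomial verification is correct and is essentially the standard argument one finds in Bruns--Herzog or any treatment of Stanley--Reisner theory. One tiny wording slip: in the irredundancy step you write ``maximality of $G$ gives $F\not\subseteq G$,'' but it is the maximality of $F$ (as a facet) that forces $F\not\subseteq G$ when $G\neq F$; this does not affect the validity of the argument.
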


\begin{ex}
\label{StanleyReisnerfaceprojectionmap}
Let $k$ be a field, $\Delta$ a simplicial complex on $[n]$ and $F$ a face of $\Delta$. Denote by $k[F]$ the polynomial ring
$k[x_i:i\in F]$. Then there is an algebra retract
\[
\theta: k[F]\to k[\Delta],
\]
with retraction map the canonical projection
\[
\phi: k[\Delta]\to k[F], ~ x_i\mapsto \begin{cases} x_i & \text{if $i\in F$};\\
                                              0   & \text{otherwise}.

                                \end{cases}
\]
\end{ex}
\begin{defn}
Let $M$ be a finitely generated submonoid of $\Z^d$ for some $d\ge 1$. Then $M$ is called an {\em affine monoid}. The monoid algebra of $M$ over $k$, denoted by $k[M]$, is called an {\em affine monoid ring}.
\end{defn}
Concretely, $k[M]=\bigoplus_{a\in M} k\cdot t^a$ as a $k$-vector space. The multiplication of basis elements of $k[M]$ is given by
\[
t^a\cdot t^b=t^{a+b}
\]
for all $a,b\in M$.

$M$ is called a {\em positive} monoid if the only unit of $M$ is the identity $0$. (Recall that a {\em unit} of a monoid $M$ is an element whose additive inverse is in $M$.)  Let $\R_+M$ be the (polyhedral) cone spanned by $M$ in $\R^d$. Then $M$ is positive if and only if the origin $\{0\}$ is a face of the cone $\R_+M$. Recall that a nonempty subset $F \subseteq C$ is called a {\em face} of a cone $C$ in $\R^d$ if (1) $F$ is the intersection of $C$ with a hyperplane $H$, and (2) $C$ is entirely contained in one of the two half-spaces of $\R^d$ cut out by $H$.  We say that a cone in $\R^d$ is {\em pointed} if $\{0\}$ is a face of it.

\begin{ex}
\label{ex_monoid_retract}
Let $M$ be an affine monoid and $F$ a face of the cone $\R_+M$. There is a natural inclusion map $k[M\cap F] \hookrightarrow k[M]$. Moreover, there
is a surjection $k[M]\to k[M\cap F]$ given by
\[
t^a \mapsto \begin{cases} t^a & \text{if $a\in M\cap F$};\\
                           0   & \text{otherwise}.
\end{cases}
\]
It is easy to check that the composition map $k[M\cap F] \to k[M\cap F]$ is the identity. Thus $k[M\cap F] \hookrightarrow k[M]$ is an algebra retract, with retraction map displayed above.
\end{ex}
For further discussions of Stanley-Reisner rings, affine monoid rings and relevant subjects, we refer to \cite{BH} and \cite{Sta}.
\subsection{Embedded toric face rings}

Let $d\ge 1$, $\Sigma$ be a {\em rational pointed fan} in the euclidean space $\R^d$, with origin $0$. That is, $\Sigma$ is a collection of cones of $\R^d$ such that the following conditions are satisfied for all $C,D \in \Sigma$:
\begin{enumerate}
 \item $C$ is a pointed cone and $C$ is generated by rational vectors;
\item if $C\in \Sigma$ and $C'$ is face of $C$ then $C'\in \Sigma$;
\item $C\cap D$ is either empty or a common face of $C$ and $D$.
\end{enumerate}
Hence ``rational" means each cone of $\Sigma$ is generated by rational vectors, and ``pointed" means each cone of $\Sigma$ is pointed.

A {\em monoidal complex} $\Mcc$ supported on $\Sigma$ is a collection of affine monoids $M_C$, where the parameter $C$ is a varying cone of $\Sigma$, such that for all $C, D\in \Sigma$:
\begin{enumerate}
 \item $M_C\subseteq C\cap \Z^d$ and $M_C$ generates the cone $C$;
\item if $D\subseteq C$, then $M_D=M_C\cap D$.
\end{enumerate}

The {\em toric face ring} of $\Mcc$, denoted by $k[\Mcc]$, is a kind of generalized monoid algebra. To be precise, $k[\Mcc]$ is the $k$-vector space with basis $\{t^a: a\in |\Mcc|=\cup_{C\in \Sigma}M_C\}$, and the multiplication on basis elements is given by
\[
t^a\cdot t^b=\begin{cases}
              t^{a+b}, ~\text{if $a$ and $b$ are contained in $M_C$ for some $C\in \Sigma$,}\\
              0, ~\text{otherwise}.
             \end{cases}
\]
The ring $k[\Mcc]$ is naturally equipped with a $\Z^d$-grading coming from the embedding of $\Sigma$.

We note that toric face rings are a common generalization of Stanley-Reisner rings and affine monoid rings. On the one hand, if each cone of $\Sigma$ is generated by linearly independent vectors, and $M_C$ is generated by exactly $\dim C$ elements for each cone $C$, then $k[\Mcc]$ is a Stanley-Reisner ring. On the other hand, if $\Sigma$ has only one maximal cone $C$, then $k[\Mcc]$ equals the affine monoid rings $k[M_C]$. 

We can compute the defining ideal of a toric face ring in the following way. Let $a_1,\ldots,a_n$ be a set of generators of $\Mcc$, i.e., $a_i\in |\Mcc|$ for every $i$ and $\{a_1,\ldots,a_n\}\cap M_C$ is a set of generators for $M_C$ for every $C\in \Sigma$. Then the generators $a_1,\ldots,a_n$ give rise to a surjection $\pi: k[x_1,\ldots,x_n]\to k[\Mcc]$ mapping $x_i$ to $t^{a_i}$. The defining ideal $I_{\Mcc}=\Ker \pi$ is computed explicitly by
\begin{lem}[{\cite[Prop.~2.3]{BKR}}]
\label{def-ideal}
The defining ideal $I_{\Mcc}$ of $k[\Mcc]$ is generated by the following monomials and binomials of $k[x_1,\ldots,x_n]$:
\begin{enumerate}
\item the monomials of the type $\prod_{i\in H}x_i$ where $H\subseteq [n]$, the set $\{a_i:i\in H\}$ is not contained in any monoid $M_C$ of the monoidal complex $\Mcc$;
\item the binomials of the type $\prod_{i\in H}x_i^{\eta_i}-\prod_{j\in G}x_j^{\nu_j}$, where $H, G\subseteq [n]$, the set $\{a_t: t\in H\cup G\}$ is contained in some monoid $M_C, C\in \Sigma$ and $\sum_{i\in H} \eta_ia_i=\sum_{j\in G} \nu_ja_j$.
\end{enumerate}
\end{lem}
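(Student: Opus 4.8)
The plan is to show that the ideal $J\subseteq k[x_1,\dots,x_n]$ generated by the monomials listed in (i) and the binomials listed in (ii) equals $I_\Mcc=\Ker\pi$, where $\pi\colon k[x_1,\dots,x_n]\to k[\Mcc]$ is the presentation $x_i\mapsto t^{a_i}$. The inclusion $J\subseteq I_\Mcc$ is the easy half: a monomial of type (i) is sent to $\prod_{i\in H}t^{a_i}$, which is $0$ in $k[\Mcc]$ because the $a_i$ ($i\in H$) lie in no common $M_C$, and a binomial of type (ii) is sent to $t^{\sum_{i\in H}\eta_ia_i}-t^{\sum_{j\in G}\nu_ja_j}=0$. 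The one thing to record is the elementary computation, read off from the multiplication rule of $k[\Mcc]$ together with the compatibility axioms of the monoidal complex, that $\pi(x^u)=t^{\sum_iu_ia_i}$ whenever $\{a_i:i\in\supp u\}$ is contained in some $M_C$, and $\pi(x^u)=0$ otherwise.

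For the reverse inclusion I would first quotient out the monomial part. Let $J_1$ be the ideal generated by the monomials of type (i) and put $A=k[x_1,\dots,x_n]/J_1$. Since $J_1$ is a squarefree monomial ideal whose generators are indexed by the ``nonfaces'', the monomials $x^u$ with $\{a_i:i\in\supp u\}$ contained in some $M_C$ form a $k$-basis of $A$ (all other monomials lie in $J_1$). Assigning to such a monomial the degree $\sum_iu_ia_i\in\Z^d$ makes $A$ a $\Z^d$-graded ring, and $\pi$ factors through a $\Z^d$-homogeneous surjection $\bar\pi\colon A\to k[\Mcc]$ for the natural $\Z^d$-grading on $k[\Mcc]$. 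Hence $\Ker\bar\pi$ is generated by homogeneous elements. A homogeneous element of degree $b$ is a $k$-linear combination $\sum_uc_ux^u$ with $\sum_iu_ia_i=b$ for every term; all of these monomials are sent by $\bar\pi$ to the single basis vector $t^b$, so such an element lies in $\Ker\bar\pi$ precisely when $\sum_uc_u=0$, in which case it is a $k$-combination of binomials $x^u-x^v$ with $\sum_iu_ia_i=\sum_iv_ia_i$ and $x^u,x^v\neq0$ in $A$. Everything thus reduces to proving that each such binomial already lies in $J$.

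To finish I would fix such a binomial $x^u-x^v$ and set $b=\sum_iu_ia_i=\sum_iv_ia_i$. Choose cones $C,D\in\Sigma$ with $\{a_i:i\in\supp u\}\subseteq M_C$ and $\{a_j:j\in\supp v\}\subseteq M_D$, so that $b\in M_C\cap M_D$. Using that $C\cap D\in\Sigma$ and that $M_E=M_C\cap E$ for a face $E$ of $C$, one checks $M_C\cap M_D=M_{C\cap D}$, hence $b\in M_{C\cap D}$. Since $\{a_1,\dots,a_n\}\cap M_{C\cap D}$ generates $M_{C\cap D}$, write $b=\sum_iw_ia_i$ with $\supp w\subseteq\{i:a_i\in M_{C\cap D}\}$. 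Then $x^u-x^w$ and $x^v-x^w$ are binomials of type (ii), because $\{a_t:t\in\supp u\cup\supp w\}\subseteq M_C$ and $\{a_t:t\in\supp v\cup\supp w\}\subseteq M_D$ while the three exponent-weighted sums of the $a_i$ all equal $b$; therefore $x^u-x^v=(x^u-x^w)-(x^v-x^w)\in J$. This yields $I_\Mcc\subseteq J$, and so $I_\Mcc=J$.

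The step I expect to be the real work is the last one. A binomial relation in $\Ker\pi$ need not have its combined support $\supp u\cup\supp v$ contained in a single cone, so it is not by itself a binomial of type (ii); the idea is to split it using the face $C\cap D$ by inserting the auxiliary monomial $x^w$ supported on generators of $M_{C\cap D}$. This is exactly where the two defining axioms of a monoidal complex get used --- that each $M_C$ is generated by (a subset of) the chosen generators $a_1,\dots,a_n$, and the intersection law $M_C\cap M_D=M_{C\cap D}$ --- and deriving the latter cleanly from the fan and monoidal-complex axioms is the one place that calls for a short but careful argument.
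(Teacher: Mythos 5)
The paper gives no proof of this lemma --- it is quoted from \cite[Prop.~2.3]{BKR} --- so there is nothing in the text to compare against; judged on its own, your argument is correct and is essentially the standard one (and the one in \cite{BKR}): kill the monomial ``nonface'' relations first, use the resulting $\Z^d$-grading to reduce to binomials $x^u-x^v$ with $\pi(x^u)=\pi(x^v)=t^b\neq 0$, then split such a binomial through an auxiliary monomial supported on generators of $M_{C\cap D}$. The two points you flag do go through cleanly: $M_C\cap M_D=M_{C\cap D}$ because $C\cap D$ is a common face and $M_{C\cap D}=M_C\cap(C\cap D)=M_D\cap(C\cap D)$, while the product formula $\pi(x^u)=t^{\sum_i u_ia_i}$ (resp.\ $0$) rests on the face property of cones --- if $a+b$ lies in a face $F$ of $C$ with $a,b\in C$ then $a,b\in F$ --- which is exactly what makes the multiplication on $k[\Mcc]$ associative and is therefore presupposed by the definition. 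One cosmetic remark: in degree $b=0$ your auxiliary monomial is $x^w=1$, so you are implicitly using binomials of type (ii) with $G=\emptyset$; the statement permits this.
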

We refer the reader to \cite{BG09}, \cite{BKR}, \cite{IR}, \cite{OkaYan} for deeper discussions of toric face rings.
\subsection{Basic properties of algebra retracts}
\begin{rem}
Observe that if $A\hookrightarrow B$ and $B\hookrightarrow C$ are algebra retracts, then the composition $A\hookrightarrow C$ is also an algebra retract.
\end{rem}
Let $R$ be a subring of $S$. Then $R$ is called a {\em pure subring} of $S$ if for every $R$-module $M$, the $R$-linear morphism $M=M\otimes_R R \to M\otimes_R S$ is injective. It is easy to see that if $R\hookrightarrow S$ is an algebra retract then $R$ is an $R$-direct summand of $S$. In particular, $R$ is a pure subring of $S$.

The following proposition helps us to pass from an arbitrary algebra retract to algebra retracts of local rings.

\begin{prop}
\label{passingtolocalretracts}
Let $R\hookrightarrow S$ be an algebra retract with retraction map $\phi: S\to R$. Then for every prime ideal $\pp \in \Spec R$, there is a natural algebra retract $R_{\pp} \hookrightarrow S_{\qq}$ where $\qq=\phi^{-1}(\pp)$ is a prime ideal of $S$.
\end{prop}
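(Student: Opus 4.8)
The plan is to localize the two structure maps of the retract at the appropriate multiplicative sets and to check that the localized maps again form an algebra retract. Write $\theta\colon R\hookrightarrow S$ for the inclusion, so that $\phi\circ\theta=\mathrm{id}_R$. First I would observe that, since $\phi$ is a ring homomorphism and $\pp$ is prime, $\qq=\phi^{-1}(\pp)$ is a prime ideal of $S$, so it makes sense to form the local ring $S_{\qq}$.

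The key compatibility to record is that $\theta$ carries $R\setminus\pp$ into $S\setminus\qq$, and $\phi$ carries $S\setminus\qq$ into $R\setminus\pp$. For the first containment: if $r\notin\pp$, then $\phi(\theta(r))=r\notin\pp$, so $\theta(r)\notin\phi^{-1}(\pp)=\qq$. For the second: if $x\notin\qq=\phi^{-1}(\pp)$, then by definition $\phi(x)\notin\pp$. Consequently, by the universal property of localization, $\theta$ induces a ring homomorphism $\theta_{\pp}\colon R_{\pp}\to S_{\qq}$, $r/s\mapsto\theta(r)/\theta(s)$, and $\phi$ induces a ring homomorphism $\phi_{\pp}\colon S_{\qq}\to R_{\pp}$, $x/y\mapsto\phi(x)/\phi(y)$; both are local homomorphisms, and they are compatible with $\theta$ and $\phi$ through the canonical localization maps $R\to R_{\pp}$ and $S\to S_{\qq}$, which is the sense in which the resulting retract is natural.

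It then remains to verify that $\phi_{\pp}\circ\theta_{\pp}=\mathrm{id}_{R_{\pp}}$: on a fraction $r/s$ with $s\in R\setminus\pp$ one computes $\phi_{\pp}(\theta(r)/\theta(s))=\phi(\theta(r))/\phi(\theta(s))=r/s$. Since $\theta_{\pp}$ thus admits a left inverse, it is injective, and therefore $R_{\pp}\hookrightarrow S_{\qq}$ is an algebra retract with retraction map $\phi_{\pp}$. I do not expect any genuine obstacle here; the only point that requires a moment of care is the dual containment $\phi(S\setminus\qq)\subseteq R\setminus\pp$, which is exactly the reason $\qq$ must be taken to be $\phi^{-1}(\pp)$ rather than, say, some prime of $S$ lying over $\pp$ produced by going-up.
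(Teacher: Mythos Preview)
Your proof is correct and is essentially the elementary unpacking of the paper's one-line argument, which just invokes the fact that the retraction $R\hookrightarrow S\to R$ induces a retraction $\Spec R\hookrightarrow\Spec S\to\Spec R$ of affine schemes. Your explicit check that $\theta(R\setminus\pp)\subseteq S\setminus\qq$ and $\phi(S\setminus\qq)\subseteq R\setminus\pp$ is exactly what underlies that scheme-theoretic statement at the level of stalks.
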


\begin{proof}
This follows from the fact that the algebra retraction $R\hookrightarrow S \to R$ induces a retraction of the underlying spaces of affine schemes $\Spec R \hookrightarrow \Spec S \to \Spec R$.
\end{proof}


\section{Ascent and descent along algebra retracts}
\label{ascent_and_descent}
In this section, we consider the problem: does every algebra retract of a ring with a given property $P$ have property $P$? For the convenience of the reader, we also recall known results on this problem.

A reduced ring $R$ is {\em seminormal} if for all $x,y\in R$ such that $x^2=y^3$, there exists an element $z\in R$ such that $x=z^3$ and $y=z^2$; see \cite{Swa}.
First, note that every algebra retract of a seminormal (normal) ring is also a seminormal (respectively, normal) ring. In fact, a more general fact is true.

\begin{prop}[\cite{HO}, \cite{Yanagi2}]
\label{seminormalretract}
Every pure subring of a seminormal (resp.~ normal) ring is also a seminormal (resp.~ normal) ring.
\end{prop}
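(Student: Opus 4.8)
The plan is to prove the two cases (seminormal, normal) in parallel, reducing each to a statement that can be checked after passing to a pure subring, and exploiting the fact that purity is preserved under localization and that the defining conditions (being reduced, being integrally closed, the $x^2=y^3$ condition) are all detected by inclusions of rings.

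First I would handle the normal case, which is the easier of the two. Let $R \hookrightarrow S$ be a pure inclusion with $S$ normal. Since $S$ is normal it is reduced, and a subring of a reduced ring is reduced, so $R$ is reduced. Normality of a Noetherian reduced ring is equivalent to normality of each of its localizations at primes, and by the remark in the excerpt purity localizes: for $\pp \in \Spec R$, the map $R_\pp \to (R \setminus \pp)^{-1} S$ is still pure (purity is stable under base change $R \to R_\pp$), and $(R \setminus \pp)^{-1} S$ is a localization of the normal ring $S$, hence normal. So we reduce to the case where $R$ is local. Then I would use Serre's criterion: $R$ is normal iff it satisfies $(R_1)$ and $(S_2)$. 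For purity, the key input is that a pure subring of a Noetherian ring $S$ satisfies $(S_2)$ whenever $S$ does — this follows because purity makes $R$ a direct summand of $S$ as an $R$-module after faithfully flat base change considerations, or more directly because pure subrings inherit the property that local cohomology in low degrees vanishes. For $(R_1)$: in codimension $\le 1$, $R_\pp$ is a Noetherian reduced local ring of dimension $\le 1$ which is a pure subring of the normal (hence in codimension $1$, regular/DVR-like) localization of $S$; a reduced $1$-dimensional pure subring of a normal ring is normal because its normalization is a finite birational extension inside the total ring of fractions, and purity forces it to coincide with $R$. Actually the cleanest route: for $R$ reduced Noetherian, $R$ is normal iff $R$ is a direct summand (as $R$-module) of its normalization $\overline{R}$ \emph{and} ... — better to cite that the conductor argument plus purity gives $R = \overline{R}$ directly. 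I would present this via: $\overline{R}$ embeds in $\overline{S} = S$ (since $S$ is normal) extending $R \hookrightarrow S$, and purity of $R$ in $S$ implies purity of $R$ in $\overline{R}$, but a ring is never a proper pure subring of a module-finite birational extension unless they are equal (the conductor ideal would have to be both an ideal of $\overline{R}$ contained in $R$ and survive tensoring — purity forces $\overline{R}/R \to \overline{R}/R \otimes_R S$ injective, yet this map kills... ) — this gives $R = \overline{R}$, so $R$ is normal.

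For the seminormal case, the strategy mirrors the above but uses the seminormalization $\+R$ in place of the normalization. Given $R \hookrightarrow S$ pure with $S$ seminormal: $R$ is reduced as before. The seminormalization $\+R$ of $R$ (inside its total ring of fractions) is the largest subextension $R \subseteq \+R \subseteq \overline{R}$ that is subintegral over $R$, i.e., a finite birational extension inducing a bijection on spectra with trivial residue field extensions. Since $S$ is seminormal we have $\+S = S$; functoriality of seminormalization under the inclusion $R \hookrightarrow S$ gives a compatible inclusion $\+R \hookrightarrow \+S = S$ extending $R \hookrightarrow S$. Then purity of $R$ in $S$ forces purity of $R$ in $\+R$. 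Now the finishing move is the same kind of conductor-and-purity argument: $\+R/R$ is a finitely generated $R$-module, and I claim a proper subintegral extension cannot be pure. Concretely, if $R \subsetneq \+R$ there is an element $y \in \+R \setminus R$ with $y^2, y^3 \in R$ (this is the defining failure of seminormality, and one can always find such a $y$ at the "first step" of the seminormalization); then consider the $R$-module $M = R/(y^2, y^3)R$ or rather track the class of $y$: purity says $M \to M \otimes_R S$ is injective for all $M$, and choosing $M$ appropriately detects that the relation forced in $\+R$ must already hold in $R$. The honest statement to invoke is: \emph{seminormality descends along pure ring maps}, whose proof is exactly that $R \hookrightarrow \+R$ being pure implies $R = \+R$ because the non-trivial part of $\+R/R$ would have to inject into $(\+R/R) \otimes_R S = \+R \otimes_R S / \text{image of } R$, and seminormality of $S$ makes the latter trivial in the relevant degrees.

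The main obstacle is the final "pure subextension of a (semi)normalization is trivial" step — everything else (reducedness, localization, functoriality of the closure operations) is formal. The cleanest way to clear this obstacle is probably not the conductor bookkeeping sketched above but rather a direct verification of the defining conditions: for normality, given $x/s$ in the total fraction ring of $R$ integral over $R$, it lies in $S$ (as $S$ is normal), and one shows the inclusion $R \hookrightarrow R[x/s]$ is pure inside $S$; but a module-finite birational pure extension of a reduced ring is an equality, because tensoring the exact sequence $0 \to R \to R[x/s] \to R[x/s]/R \to 0$ with $S$ and using that $R[x/s]/R$ is supported on a proper closed subset while purity keeps it injecting, combined with $R[x/s] \otimes_R S$ being a \emph{birational} extension of $S$ hence equal to $S$ since $S$ is normal — forces $R[x/s]/R \otimes_R S = 0$, contradiction unless $R[x/s]/R = 0$. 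The seminormal case is handled identically with "integral" replaced by "subintegral" and "normal" by "seminormal". Since the excerpt explicitly attributes this to \cite{HO} and \cite{Yanagi2}, in the paper itself one may simply cite those sources; I would include the short argument above as the proof.
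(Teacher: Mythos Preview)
The paper does not actually prove this proposition: immediately after stating it, the authors simply attribute the seminormal case to the proof of \cite[Prop.~5]{Yanagi2} (remarking that only purity is used there) and the normal case to \cite[Cor.~9.11]{HO}. So there is no in-paper argument to compare against, and your proposal already does far more than the paper. Your overall architecture---pass to the (semi)normalization, observe it lands in $S$, conclude that $R$ sits purely inside this intermediate extension, and then argue that a pure module-finite birational (resp.\ subintegral) extension must be an equality---is the standard route and is in the spirit of the cited sources.

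That said, the execution has a real gap. In the normal case your key step asserts that ``$R[x/s]\otimes_R S$ is a birational extension of $S$, hence equal to $S$ since $S$ is normal,'' and from this you want $(R[x/s]/R)\otimes_R S=0$. But $R[x/s]\otimes_R S$ need not embed in the total ring of fractions of $S$: you only have the surjective multiplication map $R[x/s]\otimes_R S\to S$, and nothing you have written forces it to be injective, so the vanishing of $(R[x/s]/R)\otimes_R S$ is unproved. The clean fix avoids tensor products: writing the integral element as $a/s$ with $s$ a non-zerodivisor in $R$, one has $a\in sS\cap R$, and purity applied to $M=R/sR$ gives $sS\cap R=sR$, hence $a/s\in R$ (with a small additional argument, or a reduction to the local domain case, to ensure $s$ is a non-zerodivisor in $S$). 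In the seminormal case your reduction to an element $z\in S$ with $z^2,z^3\in R$ is correct, but the concluding ``purity forces $z\in R$'' is asserted rather than shown; since $z$ need not lie in the total ring of fractions of $R$, the non-zerodivisor trick does not apply directly, and the extra work in Yanagihara's proof is genuinely needed here.
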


The first part follows from the proof of \cite[Prop.~5]{Yanagi2}, since the author only needs the assumption of purity for the proof. The second part of this
result was proved in \cite[Cor.~9.11]{HO}.

It is proved in \cite{Cos} that basic properties like normality of domains, factoriality, and regularity also descend. The
reader may wish to consult \cite{Apa} for more discussion of results of this type, for example the ascent and descent of
Cohen-Macaulayness and Gorensteinness along local morphisms of local rings. In that paper, it is crucial to apply
 relative notions like flat dimension, Cohen-Macaulayness, and Gorensteinness of morphisms.

Behavior of Koszul algebras along graded algebra retracts is given by the following result of Herzog, Hibi, and Ohsugi \cite[Prop.~1.4]{HHO}: Let $R\hookrightarrow S$ be an algebra retract of homogeneous $k$-algebras with  retraction map $\phi: S\rightarrow R$, where $R\neq 0$. Then $S$ is Koszul if and only if $R$ is Koszul and  $R$ has linear resolution as an $S$-module via $\phi$.

We have a similar result for retracts of regular rings and locally complete intersections. The descent of regularity along algebra
retracts was first proved in \cite[Cor.~1.11]{Cos} by a non-homological method. A homological proof of this result is given in the sequel.
\begin{thm}
\label{regularretract}
Every algebra retract of a regular (resp.~locally complete intersection) ring is also a regular (resp.~locally complete intersection) ring.
\end{thm}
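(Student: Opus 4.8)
The plan is to reduce to the local case via Proposition~\ref{passingtolocalretracts} and then argue homologically. Suppose $\theta\colon R\hookrightarrow S$ is an algebra retract with retraction $\phi\colon S\to R$, and assume $S$ is regular (resp.\ locally complete intersection). Fix a prime $\pp\in\Spec R$ and set $\qq=\phi^{-1}(\pp)$; by Proposition~\ref{passingtolocalretracts} we get an algebra retract $R_\pp\hookrightarrow S_\qq$ of local rings, and $S_\qq$ is regular (resp.\ complete intersection), since localizations of regular (resp.\ l.c.i.) rings retain the property. It therefore suffices to treat the case where $(R,\mm)\hookrightarrow(S,\nn)$ is a local algebra retract with $S$ regular (resp.\ complete intersection): we must show $R$ is regular (resp.\ complete intersection).

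The key point is that $R$ is a direct summand of $S$ as an $R$-module (as noted just before Proposition~\ref{passingtolocalretracts}), so $R$ is a \emph{pure} $S$-algebra, and in particular $R$ has finite flat dimension over $S$ would be too much to hope for directly; instead I would use the retraction the other way. The map $\phi\colon S\to R$ makes $R$ into an $S$-algebra, and since $\phi$ is surjective, $R=S/I$ where $I=\Ker\phi$. So the task becomes: if $S$ is regular (resp.\ l.c.i.) and $S\to S/I$ admits a section (a ring map $S/I\to S$ splitting the quotient), then $S/I$ is regular (resp.\ l.c.i.). For regularity, the standard route is to show $R$ has finite projective dimension over $S$: by a theorem in the spirit of Herzog/Apassov, the existence of the section $\theta$ forces the $S$-module $R$ to have finite projective dimension (indeed $\Tor^S_i(R,R)$ is a direct summand of $\Tor^S_i(S,R)=0$ for $i>0$ after tensoring appropriately — more precisely one plays $R\otimes_S R$ against the splitting to see $\Tor^S_{\geq 1}(R,R)=0$, hence $R$ is $S$-flat, hence $I$ is generated by an idempotent-type element, which in the local case forces $I=0$ or $R=0$; since $R\neq 0$ we get $I=0$ and $R=S$ is regular). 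For the l.c.i.\ case one argues that $\projdim_S R<\infty$ together with $S$ l.c.i.\ implies (by Avramov's theory of complexity, or by the Gulliksen rigidity results) that $R$ is itself l.c.i.

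The main obstacle I expect is the homological bookkeeping that upgrades "the quotient $S\to R$ splits" into "$R$ has finite projective dimension (indeed is flat, or a localization) over $S$." Purity of $R$ in $S$ gives that $\Tor^S_i(R,M)$ is a direct summand of $\Tor^S_i(R,M\otimes_R S)$ for $R$-modules $M$, and more usefully that $\Tor^R_i(M,N)\hookrightarrow \Tor^R_i(M,N)\otimes_R S$; combined with the section one should be able to show $\Tor^S_i(R,R)=0$ for $i\geq 1$. Once $R$ is flat over $S$ and $S\to R$ is surjective with $R,S$ Noetherian local, the ideal $I$ satisfies $I=I^2$, so $I=0$ by Nakayama, giving $R\cong S$ in the local case — which is even stronger than needed, and immediately yields both the regular and the l.c.i.\ conclusions simultaneously. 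I would double-check whether the globalization really needs only this local statement (it does: regularity and the l.c.i.\ property are both local conditions on $\Spec R$), so patching the local isomorphisms $R_\pp\cong S_\qq$ pointwise suffices to conclude $R$ is regular (resp.\ l.c.i.).
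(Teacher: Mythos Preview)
Your reduction to the local case via Proposition~\ref{passingtolocalretracts} is exactly what the paper does. The problem is the local argument: you conclude that in the local case $R\cong S$, and this is simply false. Take $S=k[[x]]$ and $R=k$, with $\theta$ the structure map and $\phi$ the map $x\mapsto 0$. This is a local algebra retract, $S$ is regular, but $R\neq S$. In this example $\Tor^S_1(R,R)=\Tor^{k[[x]]}_1(k,k)\cong k\neq 0$, so $R$ is not flat over $S$ and your Nakayama step never gets off the ground.

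The error is in the sentence ``$\Tor^S_i(R,R)$ is a direct summand of $\Tor^S_i(S,R)=0$.'' The splitting $R\hookrightarrow S\twoheadrightarrow R$ is a splitting of \emph{$R$-modules}, not of $S$-modules; there is no reason for $R$ to be an $S$-direct summand of $S$, and in the example above it is not. Purity of $R$ in $S$ lets you compare $\Tor^R$-groups, not $\Tor^S$-groups, so the vanishing you want does not follow.

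The paper sidesteps this entirely by quoting Apassov's theorem (Theorem~\ref{regularity_ci_descent}): if $(R,\mm)\to(S,\nn)$ is local, $S$ is regular (resp.\ c.i.), and there is a finite $S$-module $M$ of finite flat dimension \emph{over $R$}, then $R$ is regular (resp.\ c.i.). One then takes $M=R$, viewed as an $S$-module via the retraction $\phi$; it is finite over $S$ since $\phi$ is surjective, and it has flat dimension $0$ over $R$ since it is $R$ itself. The point is that the flat-dimension hypothesis is over $R$, where it is trivially satisfied, not over $S$, where (as the example shows) it can fail.
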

Therefore, algebra retracts are much better behaved than direct summands or pure subrings.
It is known that a direct summand of a regular ring is not Gorenstein in general
(e.g. the subring $k[x^4, x^3y, x^2y^2, xy^3, y^4]$ of $k[x,y]$, $k$ an arbitrary field).

\begin{proof}[\it{Proof of Theorem \ref{regularretract}}]
By Proposition \ref{passingtolocalretracts}, it is enough to assume that $R \hookrightarrow S$ be an algebra retract of local rings.
Then applying the following Theorem \ref{regularity_ci_descent}, we obtain the conclusion.
\end{proof}
The case of Theorem~\ref{regularretract} for local rings and homomorphisms follows from the next result.
\begin{thm}[{Apassov \cite[pp.~929-930]{Apa}}]
\label{regularity_ci_descent}
Let $R\to S$ be a local homomorphism of Noetherian local rings. Assume that there exists a finite $S$-module $M$ such that $M$ has finite flat
dimension as an $R$-module. If $S$ is regular (resp.~a complete intersection) then so is $R$.
\end{thm}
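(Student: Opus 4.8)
My plan is to settle the regular case by a thick-subcategory argument and then to bootstrap the complete intersection case from it. Write $\mm$ and $\nn$ for the maximal ideals of $R$ and $S$. Since $S$ is regular it has finite global dimension, so every finitely generated $S$-module, in particular $M$, is a perfect complex over $S$; hence by the Hopkins--Neeman classification of thick subcategories of $D^{\mathrm{b}}_{\mathrm{fg}}(S)=D^{\mathrm{perf}}(S)$, the thick subcategory generated by $M$ equals $\{\,C:\supp_S C\subseteq\supp_S M\,\}$. As $M\neq0$ is finitely generated over the local ring $S$ we have $\nn\in\supp_S M$, so the residue field $S/\nn$ lies in the thick subcategory generated by $M$ inside $D^{\mathrm{b}}_{\mathrm{fg}}(S)$. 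I would then transport this along $R\to S$: every object of $D^{\mathrm{b}}_{\mathrm{fg}}(S)$ is a bounded complex of $R$-modules via $R\to S$, and the condition that $C\otimes^{\mathbf L}_R(R/\mm)$ be bounded (i.e.\ that $C$ have finite flat dimension over $R$) is stable under shifts, mapping cones and direct summands. Since $M$ has finite flat dimension over $R$ by hypothesis, so does $S/\nn$; but $\mm$ kills $S/\nn$, so as an $R$-module it is a direct sum of copies of $R/\mm$, whence $\Tor^R_i(R/\mm,R/\mm)=0$ for $i\gg0$, i.e.\ $\projdim_R(R/\mm)<\infty$, and $R$ is regular by the Auslander--Buchsbaum--Serre theorem. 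This argument needs neither completion nor a factorization of $\varphi$.

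For the complete intersection case this shortcut is unavailable (over a singular complete intersection $S$ the residue field need not lie in the thick subcategory of $D^{\mathrm{b}}_{\mathrm{fg}}(S)$ generated by $M$, since the cohomological support variety of $M$, in the sense of the Eisenbud operators, may be a proper subvariety), so I would first reduce to the surjective case. Completing is harmless: regularity, the complete intersection property, and the finiteness of the flat dimension of $M$ over $R$ are all insensitive to it, the last via a Nakayama argument over $S$ which identifies $\operatorname{fd}_R M$ with $\sup\{i:\Tor^R_i(M,R/\mm)\neq0\}$. Take a Cohen factorization $\widehat R\to R'\twoheadrightarrow\widehat S$ with $R'$ complete, $\widehat R\to R'$ flat with regular closed fibre $\overline{R'}:=R'/\mm R'$, and $R'\twoheadrightarrow\widehat S$ surjective; then $R'$ is a complete intersection if and only if $\widehat R$, hence $R$, is. The collapse of the change-of-rings spectral sequence for the flat map $\widehat R\to R'$ gives $\Tor^{R'}_i(\widehat M,\overline{R'})\cong\Tor^{\widehat R}_i(\widehat M,R/\mm)$, which vanishes for $i\gg0$; and because $\overline{R'}$ is regular, tensoring a finite free resolution of its residue field into the bounded complex $\widehat M\otimes^{\mathbf L}_{R'}\overline{R'}$ yields a bounded complex, so $\widehat M$ has finite projective dimension over $R'$. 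One is thus reduced to the case of a surjective local homomorphism $R\twoheadrightarrow T=R/I$ with $T$ a complete intersection and $N\neq0$ a finitely generated $T$-module of finite projective dimension over $R$, and one must show that $R$ is a complete intersection.

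This surjective case is where I expect the real difficulty to lie. If $\projdim_R T<\infty$ one can conclude directly, since then $\mathrm{CI}\text{-}\dim_R(R/\mm)\le\mathrm{CI}\text{-}\dim_T(R/\mm)+\projdim_R T<\infty$ and so $R$ is a complete intersection by Avramov--Gasharov--Peeva. Otherwise I would work with the Tor-algebras $\Tor^R_\bullet(R/\mm,R/\mm)$ and $\Tor^T_\bullet(R/\mm,R/\mm)$ and the deviations $\varepsilon_i(R)$, using the change-of-rings spectral sequence $\Tor^T_p(N,\Tor^R_q(T,R/\mm))\Rightarrow\Tor^R_{p+q}(N,R/\mm)$, whose abutment is bounded because $\projdim_R N<\infty$, to constrain $\Tor^R_\bullet(T,R/\mm)$, and then the Gulliksen--Avramov theory of deviations (in particular, that a local ring with $\varepsilon_i=0$ for all $i\gg0$ is necessarily a complete intersection) to reach a contradiction were $R$ not a complete intersection. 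Pinning down exactly how a single module $N$ forces the higher deviations of $R$ to vanish, or equivalently first establishing that $S$ has finite flat dimension over $R$ and then invoking the descent of the complete intersection property along local homomorphisms of finite flat dimension (Avramov), is the main obstacle.
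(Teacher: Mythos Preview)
The paper does not prove this theorem; it is quoted from Apassov \cite{Apa} and used as a black box in the proof of Theorem~\ref{regularretract}. There is therefore no argument in the paper to compare yours against.

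That said, your treatment of the regular case is a complete and correct proof. The three ingredients---that over the regular local ring $S$ one has $D^b_{\mathrm{fg}}(S)=D^{\mathrm{perf}}(S)$, so by Hopkins--Neeman the residue field $S/\nn$ lies in the thick subcategory generated by any nonzero finitely generated $S$-module $M$; that the objects $C\in D^b_{\mathrm{fg}}(S)$ for which $C\otimes^{\mathbf L}_R(R/\mm)$ is bounded form a thick subcategory; and that $S/\nn$ is a direct sum of copies of $R/\mm$ as an $R$-module---combine exactly as you say to give $\projdim_R(R/\mm)<\infty$.

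Your complete intersection case, by contrast, is only a sketch with an explicitly acknowledged gap. The reduction via Cohen factorization to a surjection $R\twoheadrightarrow T$ with $T$ a complete intersection and some finitely generated $T$-module $N$ of finite projective dimension over $R$ is correct, and you are right that $\projdim_R T<\infty$ would finish the argument (via CI-dimension or Avramov's descent theorem for local homomorphisms of finite flat dimension). But you do not prove that the existence of such an $N$ forces $\projdim_R T<\infty$, nor do you carry out the deviation argument you allude to; you yourself call this ``the main obstacle.'' The change-of-rings spectral sequence you write down does not, without further input, give enough control on $\Tor^R_\bullet(T,R/\mm)$ to force the higher deviations of $R$ to vanish. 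This is precisely where Apassov's argument (or an equivalent approach via Andr\'e--Quillen homology or the homotopy Lie algebra) does the real work, and that work is missing here.
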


Recall that an $R$-module $M$ has finite flat dimension if there is a finite resolution of $M$ by flat $R$-modules. In the case $M$ is a finite
$R$-module, $M$ has finite flat dimension over $R$ if and only if it has finite projective dimension over $R$.

To use Theorem \ref{regularity_ci_descent}, we need only choose $M$ to be the finite $S$-module $R$ (the module structure induced by the retraction map).  Clearly $R$ has finite flat dimension over $R$.

We can characterize regularity completely for local algebra retracts in the next result; the proof depends on a result of Herzog \cite[Thm.~1]{Her}.
\begin{prop}
\label{regularlocalretract}
Let $(R,\mm)\hookrightarrow (S,\nn)$ be an algebra retract of local rings. Then $S$ is regular if and only if $R$ is regular and
$\projdim_S R<\infty$, where $R$ is considered as an $S$-module via the retraction map.
\end{prop}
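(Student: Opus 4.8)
The plan is to prove both implications. For the forward direction, if $S$ is regular, then $R$ is regular by Theorem~\ref{regularretract} (equivalently Theorem~\ref{regularity_ci_descent}), and $\projdim_S R < \infty$ since over a regular local ring every finitely generated module has finite projective dimension; here $R$ is a finite $S$-module via the retraction map $\phi$.

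For the converse, assume $R$ is regular and $\projdim_S R < \infty$. The key is to invoke Herzog's theorem \cite[Thm.~1]{Her}, which concerns the behavior of ring homomorphisms admitting sections. Concretely, I would use the following: given the algebra retract $\theta \colon R \hookrightarrow S$ with section $\phi \colon S \to R$, one studies the minimal free resolution of $R$ as an $S$-module. The retraction identifies $R$ with $S/\ker\phi$, and $\ker\phi$ is generated by part of a system of parameters (or at least behaves well) because of the splitting. Herzog's result should say something like: the $S$-algebra $R$ being a retract forces the kernel ideal $\ker\phi$ to be generated by a regular sequence when $\projdim_S R$ is finite, which is exactly a form of the assertion that $S$ is a complete intersection (in fact regular) over $R$ relative to this data. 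Combined with $R$ regular, this yields $S$ regular: if $\ker\phi$ is generated by an $S$-regular sequence of length $n$ and $S/\ker\phi = R$ is regular, then $S$ is regular as well, since modding out a regular element from a local ring can only pass from regular to regular in the reverse direction—more precisely, $\dim S = \dim R + n$ and the embedding dimension inequality $\operatorname{embdim} S \le \operatorname{embdim} R + n = \dim R + n = \dim S$ forces equality.

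The main obstacle I expect is making precise the step "$\projdim_S R < \infty$ implies $\ker\phi$ is generated by an $S$-regular sequence." This is where Herzog's theorem does the real work: for a general surjection $S \to R$ with finite projective dimension, the kernel need not be generated by a regular sequence, but the presence of the section $\phi$ (the retract structure) is what rigidifies the situation. The argument likely proceeds by choosing a minimal generating set $y_1, \ldots, y_n$ of $\ker\phi$ mapping to a regular system of parameters in $R$ complementary picture, lifting, and using the Koszul complex on these elements together with the finiteness of $\projdim_S R$ to deduce the $y_i$ form a regular sequence (an Auslander--Buchsbaum / depth-counting argument). One must be careful that the grading is respected throughout and that minimality of generators is preserved under $\phi$, which it is since $\phi$ is a retraction onto a local subring with $\phi(\mathfrak n) = \mathfrak m$.
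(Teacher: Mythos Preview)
Your forward direction is fine, but the converse has a genuine gap: you have misidentified the content of Herzog's theorem. Herzog \cite[Thm.~1]{Her} does \emph{not} assert that $\ker\phi$ is generated by a regular sequence. What it actually proves is the factorization of Poincar\'e series
\[
P^S_k(t) = P^S_R(t)\,P^R_k(t),
\]
where $k$ is the common residue field (note $S/\nn \cong R/\mm$ since $\phi$ is surjective). This is the whole input, and it finishes the proof immediately: comparing the degrees of these power series (as polynomials or as infinite series) gives
\[
\projdim_S k = \projdim_S R + \projdim_R k,
\]
so $\projdim_S k < \infty$ if and only if both summands on the right are finite. By the Auslander--Buchsbaum--Serre theorem, the left side is finite iff $S$ is regular, and $\projdim_R k < \infty$ iff $R$ is regular. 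Both directions of the proposition fall out at once.

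Your proposed route---show $\ker\phi$ is generated by an $S$-regular sequence, then deduce $S$ regular from $R$ regular---is circular as stated: the standard way to know that a cyclic module of finite projective dimension over a local ring has its annihilator generated by a regular sequence already presupposes ambient regularity (or at least Cohen--Macaulayness plus more). The retract structure by itself does not supply this; your sketch in the last paragraph (``lifting, and using the Koszul complex \ldots'') never produces an argument that the chosen generators of $\ker\phi$ are a regular sequence. What the retract structure \emph{does} give you, via Herzog, is the Poincar\'e series identity above, and that is what you should use.
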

\begin{proof}
Since there is a surjection $S\to R$, we have $S/{\nn}\cong R/{\mm}$. Let $k$ be the common residue field of $R$ and $S$. By \cite[Thm.~1]{Her}, we have $P^S_k(t)=P^S_R(t)P^R_k(t)$, where $P^R_M(t)$ denotes the Poincar\'e series of a finitely generated $R$-module $M$. In particular, $\projdim_S k=\projdim_S R+\projdim_R k$. By Auslander-Buchsbaum-Serre's theorem, $S$ is regular if and only if $\projdim _S k < \infty$. Using the above identity, the last inequality is equivalent to the condition that $\projdim_S R$ and $\projdim_R k$ are finite. Thus $S$ is regular if and only if $R$ is regular and $\projdim_S R<\infty.$
\end{proof}

\begin{rem}
Consider the ring $S=k[x]/(x(x-1)^2)$ where $k$ is a field, $x$ an indeterminate over $k$, and $R=S/(x) \cong k$. It is easy to see that $R$ is an algebra retract
of $S$ and $R$ is a projective $S$-module. The last statement can be checked by localizing at the maximal ideals of $S$, namely $(x)$ and $(x-1)$ (namely, $R_{(x)} = S_{(x)}$ and $R_{(x-1)} = 0$).
Thus $\projdim _S R=0$. However, $S$ is not a regular ring, since $S_{(x-1)} \cong k[y]/(y^2)$ is not a regular local ring.

Thus the ``if" part of Proposition \ref{regularlocalretract} is not true for non-local rings, namely given an algebra retract of arbitrary rings
$R\hookrightarrow S$, it can happen that $R\neq 0$ is a regular ring and $\projdim_S R < \infty$ but $S$ is a non-regular ring.
\end{rem}
We are going to give a similar characterization of the ascent-descent of complete intersections along local algebra retracts. First, given a noetherian local ring $(R,\mm,k)$ and a finitely generated $R$-module $M$, denote by $\beta^R_i(M)=\dim_k\Tor^R_i(k,M)$ the $i$th Betti number of $M$. The {\em complexity} of $M$ over $R$, denoted $\cx_R M$, is defined as follow
\begin{gather*}
\cx_R M=\inf \left\{d\in \N: \beta^R_i(M)\le ci^{d-1} ~\textnormal{for all $i\gg 0$ and for some constant $c$}\right.\\
\left.\textnormal{not depending on $i$}\right\}.
\end{gather*}
Complete intersections can be characterized by the following analogue of Auslander-Buchsbaum-Serre's theorem.
\begin{thm}[Gulliksen {\cite[Thm.~2.3]{Gu}}]
\label{thm_Gulliksen}
Let $(R,\mm,k)$ be a noetherian local ring. The following statements are equivalent:
\begin{enumerate}
\item $R$ is a complete intersection;
\item $\cx_R M<\infty$ for every finitely generated $R$-module $M$;
\item $\cx_R k<\infty$.
\end{enumerate}
\end{thm}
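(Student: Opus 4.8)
The plan is to establish the cycle of implications (i) $\Rightarrow$ (ii) $\Rightarrow$ (iii) $\Rightarrow$ (i); the implication (ii) $\Rightarrow$ (iii) is immediate on specializing $M=k$, so the real work is in (i) $\Rightarrow$ (ii) and (iii) $\Rightarrow$ (i). Throughout one may replace $R$ by its $\mm$-adic completion, since neither the complete intersection property nor the Betti numbers $\beta^R_i(M)=\dim_k\Tor^R_i(k,M)$ of a finitely generated module change under completion; so assume $R$ is complete.

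For (i) $\Rightarrow$ (ii), write $R=Q/(f_1,\dots,f_c)$ with $(Q,\nn)$ regular local and $f_1,\dots,f_c\in \nn^2$ a $Q$-regular sequence, where $c=\codim R$. The cleanest route is via the Eisenbud operators: a minimal free resolution $F_\bullet$ of $M$ over $R$ carries chain endomorphisms $\chi_1,\dots,\chi_c$ of cohomological degree $2$, well defined up to homotopy and pairwise commuting up to homotopy, which make $\operatorname{Ext}^\bullet_R(M,k)=\bigoplus_i\operatorname{Ext}^i_R(M,k)$ into a graded module over the polynomial ring $\Rc=k[\chi_1,\dots,\chi_c]$ (with $\deg\chi_j=2$). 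One shows this module is finitely generated over $\Rc$; since $\Rc$ is a polynomial ring in $c$ variables over a field, the Hilbert function $i\mapsto\dim_k\operatorname{Ext}^i_R(M,k)=\beta^R_i(M)$ is eventually a quasi-polynomial in $i$ of degree at most $c-1$. Hence $\beta^R_i(M)\le c'i^{c-1}$ for $i\gg 0$ and some constant $c'$, so $\cx_R M\le c<\infty$. (Alternatively one inducts on $c$: the case $c=0$ is Auslander--Buchsbaum--Serre, the hypersurface case $c=1$ follows from the classical long exact sequence comparing $\Tor^R$ and $\Tor^Q$, which forces $\beta^R_i(M)$ to be bounded, and the inductive step factors $R$ through $Q/(f_1,\dots,f_{c-1})$ using the standard change-of-rings six-term exact sequences.)

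For (iii) $\Rightarrow$ (i), bring in the acyclic closure (Tate's adjunction of variables) $R\langle X\rangle\xrightarrow{\ \simeq\ }k$, a minimal DG algebra resolution whose number of exterior/divided-power generators in homological degree $i$ is the $i$-th deviation $\varepsilon_i=\varepsilon_i(R)$. Counting $k$-bases degree by degree gives the Tate--Assmus product formula
\[
P^R_k(t)\;=\;\frac{\prod_{i\ge 1}\,(1+t^{2i-1})^{\varepsilon_{2i-1}}}{\prod_{i\ge 1}\,(1-t^{2i})^{\varepsilon_{2i}}},
\]
where always $\varepsilon_1=\dim_k\mm/\mm^2$ and $\varepsilon_2$ is the minimal number of defining relations, both finite. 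The hypothesis $\cx_R k<\infty$ says the coefficients of $P^R_k(t)$ grow only polynomially, equivalently $\curv R=1$. From this one must deduce that $\varepsilon_i=0$ for all $i\ge 3$; by the Tate--Assmus theorem that condition is exactly the statement that $R$ is a complete intersection (and then $P^R_k(t)=(1+t)^{\varepsilon_1}/(1-t^2)^{\varepsilon_2}$, consistent with (i) $\Rightarrow$ (ii) above). The deduction is the crux: one uses that $\operatorname{Ext}^\bullet_R(k,k)$ is the universal enveloping algebra of the graded homotopy Lie algebra $\pi^\bullet(R)$ with $\dim_k\pi^i(R)=\varepsilon_i$, and invokes the growth dichotomy for such Lie algebras --- if some $\varepsilon_j\ne 0$ with $j\ge 3$, or infinitely many $\varepsilon_i$ are nonzero, then $\pi^\bullet(R)$ contains a free graded Lie subalgebra on at least two generators, so by Poincar\'e--Birkhoff--Witt its enveloping algebra, hence $\operatorname{Ext}^\bullet_R(k,k)$, has exponentially growing dimensions, contradicting polynomial growth of $\beta^R_i(k)$.

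The main obstacle is precisely this last step of (iii) $\Rightarrow$ (i): translating ``polynomial growth of $\beta^R_i(k)$'' into ``the deviations vanish in degrees $\ge 3$.'' The easy directions only ever need that $P^R_k(t)$ is rational of a controlled shape; the converse requires the genuinely structural input that non-complete-intersection local rings have strictly exponential Betti growth of the residue field (equivalently, a ``big'' homotopy Lie algebra). Everything else --- the reduction to the complete case, the Eisenbud-operator finiteness, and the product formula --- is either formal or standard.
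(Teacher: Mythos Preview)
The paper does not prove this theorem; it is quoted with attribution to Gulliksen \cite[Thm.~2.3]{Gu} and used as a black box in the proof of Proposition~\ref{retracts_ci}. So there is no ``paper's own proof'' to compare your proposal against.

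That said, your sketch is a reasonable outline of how the result is established in the literature. The reduction to the complete case and the argument for (i) $\Rightarrow$ (ii) via Eisenbud operators (or the change-of-rings induction on $c$) are standard and essentially correct. For (iii) $\Rightarrow$ (i), you correctly identify the crux and reach for the homotopy Lie algebra dichotomy: if $R$ is not a complete intersection then $\pi^\bullet(R)$ contains a free Lie subalgebra on two generators, forcing exponential growth. One caution: this dichotomy (in the form you invoke) is due to Avramov and others and postdates Gulliksen's 1980 paper, so it is not the argument Gulliksen himself gave; his original proof works more directly with the deviations and derived functors. If you intend this as a proof rather than a historical reconstruction, that is fine, but you should be aware that you are citing a theorem at least as deep as the one you are proving. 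A more self-contained route for (iii) $\Rightarrow$ (i), closer to Gulliksen's, is to show that if some $\varepsilon_j\neq 0$ for $j\ge 3$ then one can produce a sequence of nonzero deviations forcing the radius of convergence of $P^R_k(t)$ to be strictly less than $1$, hence $\curv_R k>1$.
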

We are ready for
\begin{prop}
\label{retracts_ci}
Let $(R,\mm)\to (S,\nn)$ be an algebra retract of noetherian local rings. Then $S$ is a complete intersection if and only if $R$ is a complete intersection and $\cx_S R<\infty$, where $R$ is considered as an $S$-module via the retraction map.
\end{prop}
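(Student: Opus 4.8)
The plan is to mirror the proof of Proposition \ref{regularlocalretract}, systematically replacing projective dimension by complexity and the theorem of Auslander--Buchsbaum--Serre by that of Gulliksen (Theorem \ref{thm_Gulliksen}). Since the retraction map $\phi\colon S\to R$ is surjective, write $R=S/I$; as $R\neq 0$ the ideal $I$ is proper, hence $I\subseteq\nn$, so $\nn R=\nn/I$ and $R/\nn R=S/\nn$. Thus $R$ and $S$ have a common residue field $k$, both structure maps are local homomorphisms, and throughout $R$ is regarded as an $S$-module via $\phi$.

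The first substantive step is to apply Herzog's theorem \cite[Thm.~1]{Her} to the retraction $\phi$, exactly as in the proof of Proposition \ref{regularlocalretract}, to obtain the factorization of Poincar\'e series
\[
P^S_k(t)=P^S_R(t)\,P^R_k(t).
\]
Comparing coefficients gives $\beta^S_i(k)=\sum_{j+\ell=i}\beta^S_j(R)\,\beta^R_\ell(k)$ for all $i$, where every Betti number occurring is a nonnegative integer and $\beta^S_0(R)=\beta^R_0(k)=1$. From this identity I would extract two consequences. First, isolating the summand with $\ell=0$ (resp.\ with $j=0$) gives the coefficientwise inequalities $\beta^S_i(R)\le\beta^S_i(k)$ and $\beta^R_i(k)\le\beta^S_i(k)$ for every $i$, whence $\cx_S R\le\cx_S k$ and $\cx_R k\le\cx_S k$. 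Second, if $\cx_S R$ and $\cx_R k$ are both finite, then the sequences $(\beta^S_j(R))_j$ and $(\beta^R_\ell(k))_\ell$ are each bounded above by a polynomial in the index, and a convolution of two polynomially bounded nonnegative sequences is again polynomially bounded (crudely, $\beta^S_i(k)\le(i+1)\max_{0\le j\le i}\beta^S_j(R)\beta^R_{i-j}(k)$); hence $\cx_S k<\infty$. Putting the two together: $\cx_S k<\infty$ if and only if $\cx_S R<\infty$ and $\cx_R k<\infty$.

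To conclude, I would invoke the equivalence (i)$\Leftrightarrow$(iii) of Gulliksen's Theorem \ref{thm_Gulliksen}, applied once to $S$ and once to $R$: $S$ is a complete intersection if and only if $\cx_S k<\infty$, and $R$ is a complete intersection if and only if $\cx_R k<\infty$. Combining this with the equivalence from the previous paragraph shows that $S$ is a complete intersection if and only if $R$ is a complete intersection and $\cx_S R<\infty$, which is the assertion.

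The only step that is not purely formal is the elementary growth estimate for convolutions of polynomially bounded sequences, and I do not expect it to pose any difficulty, since only the \emph{finiteness} of complexity is at issue and the crude bound above already suffices. All the real content is carried by Herzog's Poincar\'e series identity and by Gulliksen's characterization of complete intersections, so the remainder of the argument is essentially mechanical.
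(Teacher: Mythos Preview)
Your proof is correct and follows essentially the same approach as the paper: both use Herzog's factorization $P^S_k(t)=P^S_R(t)\,P^R_k(t)$ to obtain the inequalities $\max\{\cx_S R,\cx_R k\}\le \cx_S k\le \cx_S R+\cx_R k$ and then invoke Gulliksen's theorem. You simply spell out the elementary coefficient comparisons and convolution estimate that the paper leaves implicit in that displayed inequality.
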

\begin{proof}
Firstly $S/\nn \cong R/\mm =k$. By \cite[Thm.~1]{Her}, we have 
\[
P^S_k(t)=P^S_R(t)P^R_k(t).
\]
Therefore $\max\{\cx_S R,\cx_R k\}\le \cx_S k \le \cx_S R+\cx_R k$. The conclusion follows immediately from Theorem \ref{thm_Gulliksen}.
\end{proof}
\begin{rem}
Complete intersections can also be characterized in terms of {\em curvature}; see Avramov's monograph \cite[Sect.~4.2, Cor.~8.2.2]{Avr}. In the situation of Proposition \ref{retracts_ci}, we can also prove that the following are equivalent:
\begin{enumerate}
\item $S$ is a complete intersection;
\item $R$ is a complete intersection and $\curv_S R\le 1$, where $\curv_S R$ denotes the curvature of $R$ as an $S$-module.
\end{enumerate} 
We leave the details to the interested reader.
\end{rem}
Next, we give an example of non-descent of Cohen-Macaulayness and Gorensteinness. In fact, we can even construct a Gorenstein Stanley-Reisner ring with a non-Cohen-Macaulay algebra retract.
\begin{ex}
\label{nondescent_Gorenstein}
Let $\Delta$ be the simplicial complex on 5 vertices $\{x,y,z,t,w\}$ which corresponds to a pentagon. In other words, the facets
 of $\Delta$ are $\{x,y\}$, $\{y,z\}$, $\{z,t\}$, $\{t,w\}$, $\{w,x\}$.
\bigskip

{\centerline{
\begin{tikzpicture}[scale=1.8]
\draw[thick] (0,1)--(-0.9510565163,0.309017)--(-0.58778525229,-0.809017)--(0.58778525229,-0.809017)--(0.9510565163,0.309017)--cycle;
\node [above] at (0,1) {$x$};
\node [right] at (0.9510565163,0.309017) {$y$};
\node [below right] at (0.58778525229,-0.809017) {$z$};
\node [below left] at (-0.58778525229,-0.809017) {$t$};
\node [left] at (-0.9510565163,0.309017) {$w$};
\end{tikzpicture}
}
}
\bigskip

It is known that $S=k[\Delta]$ is a Gorenstein ring, see \cite[Sect.~5.6]{BH}. We have $S=k[x,y,z,t,w]/(xz,xt,yt,yw,zw)$.

Denote $W=\{x,z,t\}$ and consider $R=k[\Delta_W]=k[x,z,t]/(xz,xt)$ (for the notation $\Delta_W$, cf.~ the beginning of Section \ref{toric_face_rings_retracts}). Then by \eqref{StanleyReisnerretract}, $R$ is an algebra retract of $S$.
However, $R$ is not Cohen-Macaulay.
\end{ex}

\section{Multigraded algebra retracts of toric face rings}
\label{toric_face_rings_retracts}

Now we describe all the $\Z^d$-graded algebra retracts of an embedded toric face ring. Let $\Sigma$ be a rational pointed fan in $\R^d$ (where $d\ge 1$) and $\Mcc$ a monoidal complex supported on $\Sigma$.

Given a simplicial complex $\Delta$ with vertex set $[n]$. For each subset $W$ of $[n]$, the restriction $\Delta_W$ of $\Delta$ on $W$ is defined to be the subcomplex
\[
\Delta_W=\{D\in\Delta: D\subseteq W\}.
\]
Generalizing this notion to monoidal complexes, we have the following.

\begin{defn}
Let $\Gamma$ be a subfan of $\Sigma$. We say that $\Gamma$ is a {\em restricted subfan} of $\Sigma$ if for every finite set of elements $x_1,\ldots,x_n$ in $|\Gamma| =\cup_{D\in \Gamma}D$ with the property that $x_1,\ldots,x_n\in C$ for some cone $C\in \Sigma$, we can also find a cone
$D\in \Gamma$ such that $x_1,\ldots,x_n \in D$.
\end{defn}

If $\Sigma$ is a simplicial fan that gives rise to a simplicial complex $\Delta$, it is easy to check that the restricted subfans of $\Sigma$ gives rise to the restrictions of the simplicial complex $\Delta$.

\begin{ex}
Consider the points in $\R^3$ with the following coordinates: $O =(0,0,0), x = (2,0,0),
y=(0,2,0), z = (0,0,2), t = (1,1,0).$

Consider the rational pointed fan $\Sigma$ in $\R^3$ with the maximal cones $\normalfont {Oxy}$ and
$\normalfont{Oyz}$. Let $\Mcc$ be the monoidal complex supported on $\Sigma$ with the two maximal monoids generated by $\{x, y, t\}$ and $\{y,z\}$. 

Applying Lemma \ref{def-ideal}, the toric face ring of $\Mcc$ is
$$
k[\Mcc]=k[x,y,z,t]/(xy-t^2, xz, tz).
$$
\bigskip
\bigskip
\setlength{\unitlength}{4cm}
\begin{picture}(1,1)
\thicklines
\put(1.5,0){\line(0,1){1}}
\put(1.5,0){\line(1,0){1}}
\put(1.5,0){\line(1,1){0.85}}

\put(1.51,-0.09){$O$}
\put(1.55,0.85){$x$}
\put(1.5,0.8){\circle*{0.03}}
\put(2.35,0.05){$z$}
\put(2.3,0){\circle*{0.03}}
\put(2.15,0.58){$y$}
\put(2.1,0.6){\circle*{0.03}}
\put(1.85,0.75){$t$}
\put(1.8,0.7){\circle*{0.03}}

\put(1.5,0.8){\line(3,-1){.6}}
\put(2.3,0){\line(-1,3){.2}}
\end{picture}
\bigskip
\bigskip

Note that the fan with two one dimensional maximal cones $Ox$ and $Oz$ is a restricted subfan of $\Sigma$. Moreover, we have an algebra retract of $k[\Mcc]$ given by
\[
k[x,z]/(xz) \hookrightarrow k[x,y,z,t]/(xy-t^2, xz, tz)
\]
where the retraction
$$
k[x,y,z,t]/(xy-t^2, xz, tz) \to k[x,z]/(xz)
$$
is the projection mapping $y$ and $t$ to zero.
\end{ex}

\begin{ex}
Let $M \subseteq \N^d$ be an positive affine monoid and $\R_+M$ the cone spanned by $M$. Then a subfan $\Gamma$ of $\R_+M$ is restricted if and only if $\Gamma$ is the subfan associated with a face $F$ of $\R_+M$.
\end{ex}

\begin{lem}
\label{lem_restricted}
Let $\Mcc$ be a monoidal complex supported on the rational pointed fan $\Sigma$ in $\R^d$ (where $d\ge 1$). Let $\Gamma$ be a subfan of $\Sigma$ and $\Mcc_{\Gamma}$ the induced monoidal subcomplex on $\Gamma$. The following statements are equivalent:
\begin{enumerate}
\item $\Gamma$ is a restricted subfan of $\Sigma$;
\item for any cones $C, C'\in \Gamma$ such that $C\cup C'\subseteq C''$ for some cone $C''\in \Sigma$, there exists a cone $D\in \Gamma$ such that $C\cup C'\subseteq D$;
\item for any $x_1,\ldots,x_n\in |\Mcc_{\Gamma}|$ such that $x_1,\ldots,x_n\in M_C$ for some cone $C\in \Sigma$, there exists a cone $D\in \Gamma$ such that $x_1,\ldots,x_n\in M_D$.
\end{enumerate}
\end{lem}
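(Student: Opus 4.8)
The plan is to prove the cycle of implications (i)$\,\Rightarrow\,$(iii)$\,\Rightarrow\,$(ii)$\,\Rightarrow\,$(i), exploiting throughout two features of the set-up. First, since $\Sigma$ is a fan, the intersection of any two of its cones is a common face of both (nonempty, as it contains the origin); the same holds inside $\Gamma$, which, being a subfan, is itself closed under passing to faces. In particular, whenever $D\in\Gamma$ and $C\in\Sigma$, the cone $D\cap C$ belongs to $\Gamma$ and is contained in $C$ --- a \emph{trimming} device that lets one replace an arbitrary cone of $\Gamma$ through a given point by one sitting inside a prescribed cone of $\Sigma$. Second, the monoidal complex axiom gives $M_E=M_D\cap E$ whenever $E$ is a face of $D$ (equivalently, whenever $E\subseteq D$), so that if a point $x$ lies in a face $E$ of $D$ and $x\in M_D$, then automatically $x\in M_E$.

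For (i)$\,\Rightarrow\,$(iii): given $x_1,\dots,x_n\in|\Mcc_\Gamma|$ all lying in $M_C$ for some $C\in\Sigma$, write $x_i\in M_{D_i}$ with $D_i\in\Gamma$; then $\{x_1,\dots,x_n\}$ is a finite subset of $|\Gamma|$ contained in the cone $C\in\Sigma$, so (i) produces $D\in\Gamma$ with all $x_i\in D$. For fixed $i$, put $E=D_i\cap D$, a common face of $D_i$ and $D$ through $x_i$; then $x_i\in M_{D_i}\cap E=M_E=M_D\cap E\subseteq M_D$, as needed. For (iii)$\,\Rightarrow\,$(ii): given $C,C'\in\Gamma$ with $C\cup C'\subseteq C''$ for some $C''\in\Sigma$, the containments $C\subseteq C''$ and $C'\subseteq C''$ are face inclusions (since $C\cap C''=C$ is a common face, and similarly for $C'$), so $M_C=M_{C''}\cap C\subseteq M_{C''}$ and $M_{C'}\subseteq M_{C''}$; hence a choice of monoid generators of $M_C$ together with one of $M_{C'}$ consists of finitely many elements of $M_{C''}$, all lying in $|\Mcc_\Gamma|$, and (iii) yields $D\in\Gamma$ containing all of them in $M_D\subseteq D$. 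Since $D$ is a convex cone and $M_C$, $M_{C'}$ span the cones $C$, $C'$, this forces $C\cup C'\subseteq D$.

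For (ii)$\,\Rightarrow\,$(i): given a finite set $x_1,\dots,x_n\in|\Gamma|$ lying in some $C\in\Sigma$, use trimming to choose $D_i\in\Gamma$ with $x_i\in D_i\subseteq C$. It then suffices to prove, by induction on $n$, the strengthened claim that whenever $D_1,\dots,D_n\in\Gamma$ with $D_1\cup\dots\cup D_n\subseteq C\in\Sigma$, there exists $D\in\Gamma$ with $D_1\cup\dots\cup D_n\subseteq D\subseteq C$. The case $n\le1$ is clear. For $n\ge2$, the inductive hypothesis gives $D'\in\Gamma$ with $D_1\cup\dots\cup D_{n-1}\subseteq D'\subseteq C$; then $D'\cup D_n\subseteq C$, so (ii) furnishes $D^*\in\Gamma$ with $D'\cup D_n\subseteq D^*$, and the trimmed cone $D:=D^*\cap C$ lies in $\Gamma$, contains $D'\cup D_n$, and is contained in $C$, completing the step. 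Applying the claim to our cones $D_i$ produces the cone of $\Gamma$ demanded by (i).

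The single step requiring genuine care --- though it conceals no deep difficulty --- is (ii)$\,\Rightarrow\,$(i): a naive induction on the number of points fails because the cone of $\Gamma$ furnished by (ii) need not lie inside the ambient cone $C$, and the remedy is exactly to build the requirement of being contained in $C$ into the induction hypothesis and to reinstate it after each application of (ii) by intersecting with $C$, which is where closedness of the fan under faces is indispensable. All the remaining verifications --- that the monoid generators of $M_C$ span the cone $C$, that $M_E=M_D\cap E$ for a face $E$ of $D$, and that a containment between cones of a fan is a face inclusion --- are immediate from the definitions recalled above.
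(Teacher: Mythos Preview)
Your proof is correct, and the implications you establish coincide in spirit with the paper's where they overlap. The main structural difference is that the paper proves the four implications (i)$\Rightarrow$(ii), (i)$\Rightarrow$(iii), (ii)$\Rightarrow$(i), and (iii)$\Rightarrow$(i) separately, while you run a single cycle (i)$\Rightarrow$(iii)$\Rightarrow$(ii)$\Rightarrow$(i). Your (i)$\Rightarrow$(iii) and (ii)$\Rightarrow$(i) are essentially the same as the paper's, up to cosmetic differences (you pass through $D_i\cap D$ where the paper passes through $D\cap C$; you phrase the merging of cones as a formal induction where the paper iterates explicitly).

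The genuinely different step is your (iii)$\Rightarrow$(ii), which replaces the paper's direct (iii)$\Rightarrow$(i). The paper's argument there is more delicate: given arbitrary (possibly irrational) points $x_1,\ldots,x_n\in|\Gamma|$ lying in some $C\in\Sigma$, it locates each $x_i$ in the relative interior of a cone $D_i\in\Gamma$, perturbs to nearby rational points $y_i$ in those relative interiors, scales the $y_i$ into $M_C$, and only then invokes (iii). Your route sidesteps this rationality--continuity manoeuvre entirely: since (ii) is stated for cones rather than points, you can feed (iii) the monoid generators of $M_C$ and $M_{C'}$, which are already lattice points, and then recover the cones from their generators by convexity. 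This is cleaner and does not use that the fan is rational at this step. The paper's organisation, on the other hand, makes each of (ii) and (iii) visibly equivalent to (i) on its own, which is mildly more informative if one later wants only one of the two characterisations.
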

\begin{proof}
(i) $\Rightarrow$ (ii): it is enough to take a system of generators $x_1,\ldots,x_n$ of $C\cup C'$ as cones.

(i) $\Rightarrow$ (iii): since $\Gamma$ is restricted subfan of $\Sigma$, we can find $D\in \Gamma$ such that $x_1,\ldots,x_n\in D$. This implies that $D\cap C\in \Gamma$ and $x_i\in D\cap C \cap M_C=M_{D\cap C}$ for all $i=1,\ldots,n$.

(ii) $\Rightarrow$ (i): let $x_1,\ldots,x_n \in |\Gamma|$ be such that $x_1,\ldots,x_n\in C$ for some cone $C\in \Sigma$. We have to show that $x_1,\ldots,x_n\in D$ for some $D\in \Gamma$. Firstly, we can choose cones $D_1,\ldots,D_m \in \Gamma$ such that $x_1,\ldots,x_n \in D_1 \cup \cdots \cup D_m$. As $x_1,\ldots,x_n\in C$, replacing $D_j$ by $D_j\cap C$, we can assume that $D_j\subseteq C$. If $m=1$, we are done. Otherwise since $D_{m-1}\cup D_m \subseteq C$, by (ii), we can find a cone $D'_{m-1}\in \Gamma$ such that $D_{m-1}, D_m\subseteq D'_{m-1}$. Further replacing $D'_{m-1}$ by $D'_{m-1}\cap C$ then we have $x_1,\ldots,x_n\in D_1 \cup \cdots \cup D_{m-2}\cup D'_{m-1}$ and $D_1,\ldots,D_{m-2},D'_{m-1}\subseteq C$. Continuing in this manner, finally we find some $D\in \Gamma$ so that $x_1,\ldots,x_n\in D$.

(iii) $\Rightarrow$ (i): again let $x_1,\ldots,x_n \in |\Gamma|$ be such that $x_1,\ldots,x_n\in C$ for some cone $C\in \Sigma$. Let $D_1,\ldots,D_n$ be the (not necessarily distinct) cones of $\Gamma$ such that $x_i$ is contained in the relative interior of $D_i$ for $i=1,\ldots,n$. Then $D_i\subseteq C$ for each $i$. By continuity, for each $i\in [n]$, we can choose $y_i\in D_i\cap \Q^d$ such that $y_i$ is contained in the relative interior of $D_i$. 

Since $C$ is a rational cone generated by $M_C$, replacing $y_1,\ldots,y_n$ by suitable integral multiples of them, we can assume that $y_1,\ldots,y_n\in M_C$. By (iii), there is a cone $D\in \Gamma$ such that $y_1,\ldots,y_n\in M_D$. In particular, $y_1,\ldots,y_n\in D$, and consequently $D_1,\ldots,D_n\subseteq D$. Hence we get the desired conclusion as $x_1,\ldots,x_n\in D$.
\end{proof}
\begin{prop}
The ring $A$ is a $\Z^d$-graded algebra retract of $k[\Mcc]$ if and only if there is a restricted subfan $\Gamma$ of $\Sigma$ such that $A\cong k[\Mcc_{\Gamma}].$
\end{prop}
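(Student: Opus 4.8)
The plan is to prove both directions, with essentially all the work in the ``only if'' part. For the ``if'' direction, given a restricted subfan $\Gamma$ of $\Sigma$, I would write down the retract directly, generalizing Examples~\ref{StanleyReisnerfaceprojectionmap} and~\ref{ex_monoid_retract}: take $\theta\colon k[\Mcc_\Gamma]\hookrightarrow k[\Mcc]$, $t^a\mapsto t^a$, and $\phi\colon k[\Mcc]\to k[\Mcc_\Gamma]$ sending $t^a\mapsto t^a$ if $a\in|\Mcc_\Gamma|$ and $t^a\mapsto 0$ otherwise. Both are visibly $\Z^d$-graded, and $\phi\circ\theta=\mathrm{id}$ is immediate, so the content is that they are ring homomorphisms. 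For $\theta$ this is exactly criterion (iii) of Lemma~\ref{lem_restricted}: if $a,b\in|\Mcc_\Gamma|$ lie in a common $M_C$ with $C\in\Sigma$, they lie in a common $M_D$ with $D\in\Gamma$. For $\phi$ one additionally needs that $a+b\notin|\Mcc_\Gamma|$ whenever $a,b\in M_C$ but, say, $a\notin|\Mcc_\Gamma|$; this uses that a face of a cone is extreme (if $x+y$ lies in a face and $x,y$ lie in the cone, then so do $x$ and $y$) together with $\Gamma$ being closed under passing to faces.

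For ``only if'', let $\theta\colon A\hookrightarrow k[\Mcc]$ be a $\Z^d$-graded retract with retraction $\phi$. Since $(k[\Mcc])_a$ equals $k\cdot t^a$ for $a\in|\Mcc|$ and $0$ otherwise, injectivity of the graded map $\theta$ forces $\dim_k A_a\le 1$ for all $a$, with $A_a\ne 0$ only when $a\in|\Mcc|$. Put $\Omega=\{a\in\Z^d:A_a\ne 0\}\subseteq|\Mcc|$, and let $u_a\in A_a$ be the unique element with $\theta(u_a)=t^a$, so $\{u_a:a\in\Omega\}$ is a $k$-basis of $A$; chasing $\phi\circ\theta=\mathrm{id}$ gives $\phi(t^a)=u_a$ for $a\in\Omega$ and $\phi(t^a)=0$ for $a\notin\Omega$. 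Applying $\theta$ to products reads off the multiplication of $A$: one gets $u_au_b=u_{a+b}$ whenever $a,b\in M_C$ for some $C\in\Sigma$ (so in particular $a+b\in\Omega$ in that case), and $u_au_b=0$ otherwise. Dually, applying $\phi$ to $t^at^b=t^{a+b}$ for $a,b\in M_C$ shows that if $a,b\in M_C$ and $a+b\in\Omega$, then $a,b\in\Omega$.

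It remains to realize $\Omega$ as $|\Mcc_\Gamma|$ for a restricted subfan $\Gamma$. The two facts just displayed say precisely that for each $C\in\Sigma$ the set $\Omega\cap M_C$ is an extreme submonoid of $M_C$, i.e.\ a face of the affine monoid $M_C$. By the standard correspondence between faces of an affine monoid and faces of the cone it spans, there is then a unique face $F_C$ of $C$ with $\Omega\cap M_C=M_C\cap F_C=M_{F_C}$. I would set $\Gamma=\{F_C:C\in\Sigma\}$ and verify three things: first, $\Gamma$ is closed under faces, since any face $G$ of $F_C$ satisfies $M_G=M_C\cap G\subseteq M_C\cap F_C\subseteq\Omega$, which forces $G=F_G\in\Gamma$; hence $\Gamma$ is a subfan of $\Sigma$, and $\bigcup_{D\in\Gamma}M_D=\bigcup_{C\in\Sigma}(\Omega\cap M_C)=\Omega$, so $|\Mcc_\Gamma|=\Omega$; second, $\Gamma$ is restricted by criterion (iii) of Lemma~\ref{lem_restricted}, because any $x_1,\dots,x_n\in\Omega$ lying in a common $M_C$ already lie in $\Omega\cap M_C=M_{F_C}$ with $F_C\in\Gamma$. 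Finally, the $k$-linear bijection $k[\Mcc_\Gamma]\to A$, $t^a\mapsto u_a$, is checked to be a $\Z^d$-graded ring isomorphism by comparing multiplication tables: the only delicate case, two elements of $\Omega$ lying in a common cone of $\Sigma$ but in no common cone of $\Gamma$, cannot occur, precisely because both then lie in $M_{F_C}$.

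The main obstacle I anticipate is the middle of the ``only if'' direction: passing from the purely algebraic object $A$ to the combinatorial datum $\Gamma$. One must correctly observe that being a retract forces $\Omega\cap M_C$ to be an \emph{extreme} submonoid (this is where both $\theta$ and $\phi$ are used), invoke the standard but nontrivial dictionary saying such a submonoid is cut out by an honest face $F_C$ of the cone $C$, and then check that the family $\{F_C\}_{C\in\Sigma}$ assembles into a subfan that is moreover restricted, with the face-closure computation above as the crux.
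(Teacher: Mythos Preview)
Your proof is correct but follows a genuinely different route from the paper's.

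The paper's argument for the ``only if'' direction is much shorter: since $A$ is a subring of the reduced ring $k[\Mcc]$, it is reduced; since it is also a $\Z^d$-graded \emph{quotient} of $k[\Mcc]$ (via the retraction $\phi$), the paper cites \cite[Lem.~2.1]{IR} to conclude immediately that $A\cong k[\Mcc_{\Gamma'}]$ for some subfan $\Gamma'$. The remaining work is then purely about the natural $k$-linear inclusion $\iota\colon k[\Mcc_{\Gamma}]\to k[\Mcc]$: the paper shows that $\iota$ is a ring homomorphism if and only if $\Gamma$ is restricted, which handles both implications at once (the projection $k[\Mcc]\to k[\Mcc_{\Gamma}]$ being a ring map for \emph{any} subfan, essentially by the face-extremality argument you gave for $\phi$).

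Your approach is self-contained and more constructive: rather than invoking \cite{IR}, you build $\Gamma$ by hand from the support set $\Omega$, using the retract structure to show each $\Omega\cap M_C$ is an extreme submonoid of $M_C$, then invoking the face/monoid-face correspondence to produce the cones $F_C$. This is essentially a direct proof of the special case of \cite[Lem.~2.1]{IR} needed here. The payoff is that your argument requires no external input and makes transparent exactly where both $\theta$ and $\phi$ are used; the cost is that it is longer and duplicates work already in the literature.
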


\begin{proof}
For any subfan $\Gamma$ of $\Sigma$, since $|\Mcc_{\Gamma}|\subseteq |\Mcc|=|\Mcc_{\Sigma}|$, there is a natural $k$-linear inclusion of $k$-vector spaces $\iota: k[\Mcc_{\Gamma}]\to k[\Mcc]$. On the other hand, assume that $A$ is a $\Z^d$-graded algebra retract of $k[\Mcc]$. The ring $A$ is reduced since it is a subring of the reduced ring $k[\Mcc]$. Moreover, since $A$ is a $\Z^d$-graded quotient of $k[\Mcc]$, by \cite[Lem.~2.1]{IR}, there is a subfan $\Gamma'$ of $\Sigma$ such that $A\cong k[\Mcc_{\Gamma'}].$ The inclusion $k[\Mcc_{\Gamma'}]\cong A \to k[\Mcc]$ is identical with the inclusion $\iota$ defined above. Hence it suffices to show that $\iota$ respects multiplication if and only if $\Gamma$ is a restricted subfan of $\Gamma$. Since there is no danger of confusion, by abuse of notation, for any $a\in |\Mcc_{\Gamma}|$, we also denote $t^a\in k[\Mcc_{\Gamma}]$ by $a$.

If $\Gamma$ is a restricted subfan, take any elements $x_1,\ldots,x_n\in |\Mcc_{\Gamma}|$. If $x_1,\ldots,x_n\in D$ for some $D\in \Gamma$ then clearly $\iota$ sends $x_1\cdots x_n \in k[\Mcc_{\Gamma}]$ to the right element in $k[\Mcc]$. If on other hand there exists no such cone $D\in \Gamma$ then $x_1\cdots x_n=0$ in $k[\Mcc_{\Gamma}]$. As $\Gamma$ is restricted, there is also no $C\in \Sigma$ such that $x_1,\ldots,x_n\in C$, hence $x_1\cdots x_n=0$ in $k[\Mcc]$ too. In any case $\iota$ does respect multiplication.

If $\Gamma$ is not a restricted subfan, then by Lemma \ref{lem_restricted}, there are elements $x_1,\ldots,x_n$ in $|\Mcc_{\Gamma}|$ such that $x_1,\ldots,x_n\in M_C$ for some cone $C\in \Sigma$ but there is no cone $D\in \Gamma$ such that $x_1,\ldots,x_n\in M_D$. Thus $x_1\cdots x_n=0$ in $k[\Mcc_{\Gamma}]$ but $x_1\cdots x_n \neq 0$ in
$k[\Mcc]$, so $\iota$ does not respect multiplication in this situation.
\end{proof}

 As consequences, we can easily classify all the $\Z^n$-graded algebra retracts of  affine monoid rings and Stanley-Reisner rings.
\begin{cor}
Let $M\subseteq \N^n$ be a positive affine monoid. Then $R$ is a $\Z^n$-graded algebra retract of $k[M]$ if and only if $R\cong k[M\cap F]$ for some face $F$ of $\R_+M$.
\end{cor}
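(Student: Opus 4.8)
The plan is to recognize $k[M]$ as a special toric face ring and then read off the conclusion from the preceding Proposition, using the earlier Example that describes the restricted subfans of the cone of a positive affine monoid.

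First I would set up the toric face ring. Let $\Sigma$ be the fan consisting of all faces of the pointed cone $\R_+M\subseteq\R^n$; its unique maximal cone is $\R_+M$ itself. Define a monoidal complex $\Mcc$ supported on $\Sigma$ by $M_F:=M\cap F$ for every face $F$ of $\R_+M$. Checking the monoidal complex axioms is routine: $M\cap F\subseteq F\cap\Z^n$ because $M\subseteq\N^n$; the set $M\cap F$ generates the cone $F$ (if $a_1,\dots,a_m$ generate $M$ and $F=\R_+M\cap H$ for a supporting hyperplane $H$ of $\R_+M$, then $F$ is generated as a cone by those $a_i$ lying on $H$, and these lie in $M\cap F$); and for faces $D\subseteq F$ one has $M_D=M\cap D=(M\cap F)\cap D=M_F\cap D$. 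Since $\Sigma$ has a unique maximal cone, $k[\Mcc]=k[M]$, as observed in Section \ref{background_on_retracts}.

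Next I would apply the preceding Proposition to this $\Mcc$: the $\Z^n$-graded algebra retracts of $k[M]=k[\Mcc]$ are, up to isomorphism, exactly the rings $k[\Mcc_\Gamma]$ for $\Gamma$ a restricted subfan of $\Sigma$. By the Example on positive affine monoids, a subfan $\Gamma\subseteq\Sigma$ is restricted if and only if it is the subfan of all faces of some single face $F$ of $\R_+M$. For such a $\Gamma$, the monoidal subcomplex $\Mcc_\Gamma$ again has a unique maximal cone, namely $F$, with associated monoid $M_F=M\cap F$, so by the same observation $k[\Mcc_\Gamma]=k[M\cap F]$. Combining these identifications with the Proposition yields the stated correspondence; as a consistency check, the ``if'' direction is precisely the retract already exhibited in Example \ref{ex_monoid_retract}.

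I do not expect a genuine obstacle: the argument merely assembles the preceding Proposition, the Example for positive affine monoids, and the remark that a toric face ring with a single maximal cone is an affine monoid ring. The one mildly technical point is the verification that $M\cap F$ spans the face $F$, needed so that $\Mcc$ is a legitimate monoidal complex; this is a classical fact about finitely generated rational cones and their faces.
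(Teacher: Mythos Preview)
Your proposal is correct and follows exactly the route the paper intends: regard $k[M]$ as the toric face ring of the face fan of $\R_+M$, invoke the preceding Proposition to identify the $\Z^n$-graded retracts with the restricted subfans, and then use the earlier Example to identify restricted subfans with faces of $\R_+M$. The paper presents the corollary without a written proof precisely because this specialization is immediate.
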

\begin{cor}
\label{StanleyReisnerretract}
Let $\Delta$ be a simplicial complex on $[n]$ and $k[\Delta]$ the corresponding Stanley-Reisner ring. Then $R$ is a $\Z^n$-graded algebra retract of $k[\Delta]$ if and only if $R\cong k[\Delta_W]$ for some subset $W$ of $[n]$.
\end{cor}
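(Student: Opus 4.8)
The plan is to deduce this corollary from the preceding proposition on $\Z^d$-graded algebra retracts of toric face rings, after realizing $k[\Delta]$ as such a ring. First I would set up the fan: with $e_1,\dots,e_n$ the standard basis of $\R^n$, put $C_F=\R_+\{e_i:i\in F\}$ for each $F\in\Delta$ and $\Sigma=\{C_F:F\in\Delta\}$. This is a rational pointed fan each of whose cones is generated by linearly independent vectors; taking $M_{C_F}=\sum_{i\in F}\N e_i$ gives a monoidal complex $\Mcc$ on $\Sigma$ with $M_C$ minimally generated by $\dim C$ elements. By the description of toric face rings in Section~\ref{background_on_retracts} (or directly from Lemma~\ref{def-ideal}: the binomial relations vanish since the $e_i$ are independent, while the monomial relations are exactly the $\prod_{i\in H}x_i$ with $H\notin\Delta$), one gets $k[\Mcc]=k[\Delta]$, and the $\Z^n$-grading that $k[\Mcc]$ carries from the embedding is precisely the fine $\Z^n$-grading of $k[\Delta]=S/I_\Delta$ with $\deg x_i=e_i$. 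Hence $\Z^n$-graded algebra retracts of $k[\Delta]$ coincide with $\Z^n$-graded algebra retracts of $k[\Mcc]$, which by the proposition are exactly the rings $k[\Mcc_\Gamma]$ for $\Gamma$ a restricted subfan of $\Sigma$.

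Next I would record the elementary dictionary between the fan picture and the simplicial picture. Every subfan $\Gamma\subseteq\Sigma$ has the form $\{C_F:F\in\Delta'\}$ for the unique subcomplex $\Delta'=\{F\in\Delta:C_F\in\Gamma\}$ (which is closed under passing to subsets because a subfan is closed under taking faces), and under this correspondence $\Mcc_\Gamma$ is the monoidal complex attached to $\Delta'$, so $k[\Mcc_\Gamma]\cong k[\Delta']$. It then remains to identify which $\Delta'$ arise from restricted $\Gamma$. Using part~(ii) of Lemma~\ref{lem_restricted} together with the observations that $C_{F_1}\cup C_{F_2}$ spans $C_{F_1\cup F_2}$ and that $C_{F_1}\cup C_{F_2}\subseteq C_G$ iff $F_1\cup F_2\subseteq G$, the condition ``$\Gamma$ is a restricted subfan'' translates into: whenever $F_1,F_2\in\Delta'$ with $F_1\cup F_2\in\Delta$, also $F_1\cup F_2\in\Delta'$.

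Finally I would check that this last condition characterizes the restrictions. If $\Delta'=\Delta_W$ and $F_1,F_2\subseteq W$ with $F_1\cup F_2\in\Delta$, then $F_1\cup F_2\subseteq W$, so $F_1\cup F_2\in\Delta_W$; thus every restriction satisfies the condition, and $k[\Delta_W]$ is indeed a $\Z^n$-graded algebra retract of $k[\Delta]$. Conversely, given $\Delta'$ satisfying the condition, set $W=\{i:\{i\}\in\Delta'\}$. Plainly $\Delta'\subseteq\Delta_W$; for the reverse, take $F=\{i_1,\dots,i_r\}\in\Delta_W$ and argue by induction that $\{i_1,\dots,i_j\}\in\Delta'$ — the singleton $\{i_1\}$ lies in $\Delta'$ since $i_1\in W$, and the inductive step follows from the condition applied to $\{i_1,\dots,i_j\}$ and $\{i_{j+1}\}$, whose union $\{i_1,\dots,i_{j+1}\}$ is contained in $F$ and $F\in\Delta$. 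Hence $F\in\Delta'$, so $\Delta'=\Delta_W$. Combining the three steps yields the corollary. There is no serious obstacle: this is essentially the ``easy to check'' remark following the definition of restricted subfan, and the only points deserving a little care are matching the two $\Z^n$-gradings in the first step and the base-case bookkeeping in the induction of the last step.
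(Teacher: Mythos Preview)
Your proof is correct and follows exactly the route the paper intends: the corollary is stated without proof as an immediate consequence of the preceding proposition, together with the remark (just after the definition of restricted subfan) that for a simplicial fan the restricted subfans correspond to the restrictions $\Delta_W$. You have simply written out in full the ``easy to check'' translation that the paper leaves to the reader.
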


\section{Bases and Stanley-Reisner retractions}
\label{base_of_a_retraction}

In \cite[Conj.~A]{BG02}, the authors conjecture that graded algebra retracts of polytopal algebras over $k$ are again polytopal algebras
over $k$. Motivated by this conjecture and by the close relationship between Stanley-Reisner rings and affine monoid
rings, we prove the main result of this paper.

\begin{thm}
\label{graded_retracts_of_SR}
Every $\Z$-graded algebra retract of a standard graded Stanley-Reisner ring over $k$ is a Stanley-Reisner ring over $k$.
\end{thm}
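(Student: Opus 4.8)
The plan is to reduce the statement to an intrinsic algebraic characterization of Stanley-Reisner rings and then verify that this characterization is inherited by graded algebra retracts. Recall that a standard graded $k$-algebra $S$ is isomorphic to a Stanley-Reisner ring if and only if $S$ is reduced and its defining ideal (with respect to some, equivalently any, minimal system of degree-one generators) is generated by squarefree monomials. A cleaner way to phrase this, which I would aim to exploit, is via the multigraded structure: $k[\Delta]$ is the unique (up to iso) standard graded $k$-algebra $S$ with $\dim_k S_1 = n$ admitting a $\Z^n$-grading refining the $\Z$-grading such that each graded piece $S_{\mathbf a}$ is at most one-dimensional and $S_{\mathbf a}\neq 0$ forces $S_{\mathbf a}$ to be spanned by a squarefree monomial. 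So the first step is: given a $\Z$-graded algebra retract $\theta\colon R\hookrightarrow S=k[\Delta]$ with retraction $\phi$, produce a finely graded structure on $R$.

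The key step is to use the retraction to transport the fine grading. Let $x_1,\dots,x_n$ be the vertex variables of $S$. Since $\theta$ and $\phi$ are graded, $R_1$ is a subspace of $S_1 = \bigoplus_{i} k\,x_i$ and $\phi|_{S_1}$ is a projection of $S_1$ onto $R_1$. By the linear-algebra fact recalled in the introduction (every projection of a finite-dimensional $k$-vector space is diagonalizable with $0,1$ eigenvalues), after a graded change of coordinates on $S$ that preserves the property of being a Stanley-Reisner presentation — here I must be careful, since not every linear change of variables sends $k[\Delta]$ to a Stanley-Reisner ring — I expect one can arrange that $R_1$ is spanned by a subset $\{x_i : i\in W\}$ of the variables for some $W\subseteq[n]$, and that $\phi$ acts on $S_1$ by killing the variables outside $W$. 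Granting this normalization, $\phi$ is forced to be (on the level of algebras) the canonical projection $x_i\mapsto x_i$ for $i\in W$, $x_i\mapsto 0$ otherwise, because a graded $k$-algebra map out of $S$ is determined by its action on $S_1$. Then $R = \theta(R) = \phi(S)$ is exactly the image of this projection, which one computes directly from the monomial basis of $k[\Delta]$ to be $k[\Delta_W]$. Thus $R\cong k[\Delta_W]$ is a Stanley-Reisner ring, and this simultaneously recovers the concrete description promised in the introduction (retracts correspond to restrictions $\Delta_W$), consistent with Corollary~\ref{StanleyReisnerretract} in the $\Z^n$-graded case.

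The main obstacle is the normalization step in the previous paragraph: showing that one may choose coordinates so that $R_1$ becomes coordinate-spanned \emph{while} $S$ retains a Stanley-Reisner presentation. A general linear substitution will destroy squarefreeness of $I_\Delta$, so the diagonalization of the projection $\phi|_{S_1}$ cannot be performed blindly. I expect the right move is not to change coordinates on all of $S_1$ but to analyze $R_1$ directly: pick a homogeneous basis $f_1,\dots,f_m$ of $R_1$ adapted to the multigrading, observe that each $f_j = \theta(f_j)$ must be a $k$-linear combination of variables whose ``supports'' interact with $I_\Delta$ in a constrained way because $R$ is reduced and $\theta$ is a ring map (so relations among the $f_j$ in $R$ pull back to relations in $S$). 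Using reducedness of $R$ together with the explicit primary decomposition $I_\Delta=\bigcap_{F\text{ facet}}(x_i:i\notin F)$, one should be able to show each $f_j$ can be taken to be a single variable, i.e., that the ``honest'' basis of $R_1$ really is a subset of the $x_i$. Once past this point the remainder is the routine verification that $\phi$ is the coordinate projection and that $\phi(k[\Delta]) = k[\Delta_W]$, using the monoid-like monomial basis of the Stanley-Reisner ring and the description of $I_{\Delta_W}$ as the restriction of $I_\Delta$.
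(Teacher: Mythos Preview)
Your proposal correctly identifies the central difficulty: one cannot freely change coordinates in $S$ without destroying the Stanley--Reisner presentation, so the naive diagonalization of $\phi|_{S_1}$ is unavailable. However, the fallback you sketch---that a suitably chosen basis of $R_1\subseteq S_1$ can be taken to consist of original variables $x_i$---is false in general, and this is a genuine gap. Take $S=k[x_1,x_2,x_3]/(x_1x_3)$ and the graded endomorphism $\psi$ with $\psi(x_1)=0$, $\psi(x_2)=0$, $\psi(x_3)=x_1+x_3$. One checks that $\psi$ is a well-defined ring map (since $\psi(x_1)\psi(x_3)=0$) and that $\psi^2=\psi$, so $R=\psi(S)=k[x_1+x_3]\cong k[t]$ is a graded algebra retract. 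Here $R_1=k\cdot(x_1+x_3)$ is not spanned by any subset of $\{x_1,x_2,x_3\}$. Nor can a coordinate change rescue this: any graded automorphism of $S$ must preserve the principal ideal $(x_1x_3)$ in the polynomial ring, which by unique factorization forces $x_1,x_3$ to be sent (up to scalar and swap) into $\{x_1,x_3\}$ and forces the image of $x_2$ to have nonzero $x_2$-coefficient; hence no automorphism carries a variable to $x_1+x_3$. Thus neither version of your normalization step goes through, and the appeal to ``reducedness plus the primary decomposition'' is not yet an argument.

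The paper circumvents this by never asking that $R_1$ be coordinate-spanned inside $S$. Instead it lifts the retract to polynomial rings $R\hookrightarrow S$ with retraction $\phi$ (Lemma~\ref{induced_polynomial_retract}) and seeks a subset $W\subseteq[n]$, called a \emph{base}, such that $\{\phi(x_i):i\in W\}$ is linearly independent in $R_1$ and $\phi(I)=\phi(I_WS)$; this yields an \emph{abstract} isomorphism $R/\phi(I)\cong S_W/I_W$, not an identification of subrings of $S$. The existence of a base is proved by induction on the number $s$ of associated primes of $I$: using that $\phi(I)$ is radical one gets $\phi(I)=\bigcap_i\phi(\pp_i)$ (Lemma~\ref{lem_extension}), then one reduces to the case where every $\pp_i$ is compatible, i.e.\ $\phi(\pp_i)=\pp_i\cap R$, by discarding a redundant prime when compatibility fails; finally a combinatorial gluing lemma (reverse induction over the linear ideals $\pp_{i_1\cdots i_t}$) assembles $W$. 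In the example above this produces $W=\{3\}$: one has $\phi(x_3)=t$ linearly independent and $\phi(I)=0=\phi(I_{\{3\}}S)$, whence $R\cong k[x_3]$---even though $R$ does not sit in $S$ as the subring $k[x_3]$.
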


First we introduce some more notions.
\begin{defn}
Let $R\hookrightarrow S$ be an algebra retract with retraction map $\phi: S\to R$. An ideal $I$ of $S$ is called a {\em compatible} ideal if $\phi(I) \subseteq
I$ (equivalently $\phi(I)=I\cap R$; note that $\phi$ restricts to the identity on $R$).
\end{defn}
Clearly if $I$ is a compatible ideal of the algebra retract $R\hookrightarrow S$ then there is an induced algebra retract
$R/\phi(I) \to S/I.$

Now assume that $S=k[x_1,\ldots,x_n]$ is a standard graded polynomial ring over $k$ and denote $\nn=S_+$. Let $I$ be a monomial ideal of $S$. For each subset $W$ of $[n]$, let $S_W:=k[x_i:i\in W]$, and define $I_W$ to be the ideal of $S_W$ generated by monomials of $I$ that are products of variables in $S_W$.

Let $\theta: A\to S/I$ be a graded algebra retract of standard graded $k$-algebras, where $S$ and $I$ are as above. Let $A=R/J$ be a presentation of $A$, where $R$ is a standard graded polynomial ring over $k$ and $J$ is a graded ideal of $R$. Denote $\mm=R_+$. The following lemma is immediate using degree reasoning.
\begin{lem}
\label{induced_polynomial_retract}
With the above notations, if $I\subseteq \nn^2$ and $J\subseteq \mm^2$, then the algebra retract $R/J \to S/I$ induces a graded algebra retract of polynomial rings $R\to S$ with retraction map $\phi: S\to R$ such that $\phi(I)=J= I\cap R$.
\end{lem}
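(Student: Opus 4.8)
The essential point is that $R$ and $S$ are standard graded polynomial rings, hence generated in degree $1$: a graded $k$-algebra homomorphism out of $R$ (or out of $S$) is uniquely determined by an arbitrary $k$-linear map on its degree-$1$ component, and every such linear map extends to one. This is where the hypotheses $I\subseteq\nn^2$ and $J\subseteq\mm^2$ are used: they give $(S/I)_1=S_1$ and $(R/J)_1=R_1$, so in degree $1$ the quotient maps $S\to S/I$ and $R\to R/J$ are isomorphisms and $A_1=(R/J)_1\cong R_1$.

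Write $\rho\colon S/I\to A$ for the retraction of the given algebra retract $\theta\colon A\hookrightarrow S/I$, so $\rho\circ\theta=\mathrm{id}_A$. First I would build two graded $k$-algebra homomorphisms of polynomial rings: let $\theta'\colon R\to S$ be the one whose degree-$1$ part is the composite $R_1\cong A_1\xrightarrow{\theta}(S/I)_1=S_1$, and let $\phi\colon S\to R$ be the one whose degree-$1$ part is $S_1=(S/I)_1\xrightarrow{\rho}A_1\cong R_1$; both exist and are unique by the previous paragraph.

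Next I would record two compatibilities. The graded $k$-algebra homomorphisms $R\to S/I$ obtained as $R\xrightarrow{\theta'}S\twoheadrightarrow S/I$ and as $R\twoheadrightarrow R/J=A\xrightarrow{\theta}S/I$ agree on $R_1$ by construction, hence are equal; similarly $S\xrightarrow{\phi}R\twoheadrightarrow R/J$ equals $S\twoheadrightarrow S/I\xrightarrow{\rho}R/J$. Consequently $\phi\circ\theta'\colon R\to R$ is a graded endomorphism whose degree-$1$ part, under the identification $R_1\cong A_1$, is $(\rho\circ\theta)|_{A_1}=\mathrm{id}_{A_1}$; being the identity in degree $1$, it is $\mathrm{id}_R$. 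Thus $\theta'$ is a split monomorphism and $\theta'\colon R\hookrightarrow S$ is a graded algebra retract with retraction $\phi$, and the two commuting squares just recorded say that this retract lies over the retract $R/J\to S/I$.

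Finally I would verify $\phi(I)=J=I\cap R$ (the last identification via $R\cong\theta'(R)$) by an element chase through those squares. If $f\in I$, then $\bar f=0$ in $S/I$, so $\overline{\phi(f)}=\rho(\bar f)=0$ in $R/J$, i.e.\ $\phi(f)\in J$; hence $\phi(I)\subseteq J$. If $g\in J$, then $\overline{\theta'(g)}=\theta(\bar g)=0$ in $S/I$, so $\theta'(g)\in I$ and $\phi(\theta'(g))=g$; hence $J\subseteq\phi(I)$ and $\theta'(J)\subseteq I\cap\theta'(R)$. Conversely, if $h\in R$ with $\theta'(h)\in I$, then $\theta(\bar h)=\overline{\theta'(h)}=0$, so $\bar h=0$ by injectivity of $\theta$, i.e.\ $h\in J$; thus $I\cap\theta'(R)=\theta'(J)$, which reads $I\cap R=J$ after the identification. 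I do not expect a genuine obstacle here: the whole argument is the ``degree reasoning'' the statement refers to, and the only care required is to keep the two commuting squares (for $\theta'$ and for $\phi$) straight and to invoke standard-gradedness consistently throughout.
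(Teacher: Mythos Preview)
Your proof is correct and is exactly the ``degree reasoning'' the paper invokes; the paper itself gives no further details beyond that phrase, so you have simply unpacked what it leaves implicit. The key points---that $I\subseteq\nn^2$ and $J\subseteq\mm^2$ make the degree-$1$ parts of the quotients agree with $S_1$ and $R_1$, that graded maps out of standard graded polynomial rings are determined by and freely extend from degree $1$, and the element chase for $\phi(I)=J=I\cap R$---are all handled correctly.
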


Assume that the conclusion of the above lemma is satisfied for some presentation $R/J$ of $A$ (this is automatic if $I\subseteq \nn^2$). That is, there is a graded algebra retract $R\to S$ with retraction map $\phi: S\to R$ such that $\phi(I)=J$. Denote by $\supp(I)$ the set of variables of $S$ which divide a minimal generator of $I$. With these notations, and motivated by the notion in \cite{BG02} of a ``based retraction'', we define a base of the algebra retract $\theta$ as follows.
\begin{defn}[Base of a retract]
\label{defn_base}
A subset $W\subseteq [n]$ is called a {\em base} of the algebra retract $\theta: R/J \to S/I$ if the two conditions belowe are satisfied:
\begin{enumerate}
\item $\{\phi(x_i):i\in W\}$ are linearly independent over $k$ (as elements of $R_1$),
\item $\phi(I)=\phi(I_WS)$.
\end{enumerate}
\end{defn}
Let us look at some examples.
\begin{ex}
Let $S=k[x,y,z]$ and $I=(xy,yz,zx)$. Consider the algebra retract $\theta: k[x,y]/(xy) \hookrightarrow k[x,y,z]/(xy,yz,zx)$, with the retraction map $\phi$ fixing $x,y$ and setting
$\phi(z)=0$. Then $\{x,y\}$ is a base of $\theta$.
\end{ex}
\begin{ex}
Let $\chara k=5$ and $S=k[x,y,z], I=(x^5,y^5,z^5)$. Consider the inclusion $\theta: k[t]/(t^5) \to S/I$ mapping $t$ to $x+y$. Let $\phi: S/I \to k[t]/(t^5)$ be given by:
\[
\phi(x)=\phi(y)=\phi(z)=\frac{t}{2} = 3t.
\]
Then $\theta$ is a retract with retraction map $\phi$, and $\{z\}$ is a base of $\theta$.
\end{ex}

A simple observation: if $W$ is a base of the algebra retract $R/J \to S/I$ then replace $W$ by a larger subset of $[n]$ if necessary, we have $R\cong S_W$ and $J=\phi(I)=\phi(I_WS)\cong I_W$. Therefore $R/J \cong S_W/I_W$, a monomial quotient ring. If $I$ is squarefree then clearly $I_W$ is also squarefree, hence like $S/I$, $R/J$ is isomorphic to a Stanley-Reisner ring. Therefore to prove Theorem \ref{graded_retracts_of_SR}, it is enough to confirm
\begin{thm}
\label{base_SR}
Every graded algebra retract of a Stanley-Reisner ring has a base.
\end{thm}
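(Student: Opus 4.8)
The plan is to transport the problem to polynomial rings via Lemma~\ref{induced_polynomial_retract} and then induct on the number of variables. Since $I_\Delta\subseteq\nn^2$ (every $\{i\}$ is a face, so $I_\Delta$ has no linear minimal generators), I would fix a minimal graded presentation $A=R/J$ --- so that $J\subseteq\mm^2$ --- and apply that lemma to obtain a graded algebra retract of standard graded polynomial rings $\phi\colon S\to R$, with section $\theta\colon R\to S$ satisfying $\phi\theta=\mathrm{id}_R$ and $\phi(I_\Delta)=J$. Because the lemma produces $\theta$ as a lift of the given section $A\to S/I_\Delta$, it also satisfies $\theta(J)\subseteq I_\Delta$. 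Put $y_i=\phi(x_i)\in R_1$ (the $y_i$ span $R_1$) and $e_i=\theta(y_i)=\theta\phi(x_i)\in S_1$; then $\theta\phi$ is an idempotent graded endomorphism of $S$ with image $\theta(R)=k[e_1,\dots,e_n]$, and $\theta(J)\subseteq I_\Delta$ unwinds to the key constraint that $\prod_{i\in G}e_i\in I_\Delta$ --- hence $\prod_{i\in G}e_i\in P_F:=(x_\ell:\ell\notin F)$ --- for every minimal non-face $G$ and every facet $F$ of $\Delta$. Equivalently, for each facet $F$ the set $T_F:=\{i\in[n]:e_i\in P_F\}$ meets every minimal non-face, so $[n]\setminus T_F$ is a face of $\Delta$.

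I would then induct on $n$. If some $y_v=0$, then $\phi$ kills $x_v$, so the retraction $S/I_\Delta\to A$ factors through $(S/I_\Delta)/(x_v)\cong k[\Delta_{[n]\setminus\{v\}}]$, again a Stanley-Reisner ring but on fewer vertices; composing the given section with this projection exhibits $A$ as a graded algebra retract of $k[\Delta_{[n]\setminus\{v\}}]$, which by induction has a base $W\subseteq[n]\setminus\{v\}$. Taking the polynomial lift of the original retract compatibly (with the image of $x_v$ set to $0$), and using that the faces of $\Delta$ and of $\Delta_{[n]\setminus\{v\}}$ agree on subsets of $[n]\setminus\{v\}$, one checks that this same $W$ is a base of the original retract. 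If all $y_i\ne 0$ and the $y_i$ are linearly independent, then $n=\dim_k R_1$, so $\phi$ is an isomorphism, $W=[n]$ is a base, and $A\cong k[\Delta]$.

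The remaining case --- all $y_i\ne 0$ but $n>\dim_k R_1$, so the $y_i$ satisfy a nontrivial linear relation --- is the heart of the argument and the step I expect to fight with. Here no variable can simply be deleted, and one must exhibit a base directly. A useful reformulation: for $W$ with $\{y_i:i\in W\}$ a basis of $R_1$, condition (ii) of Definition~\ref{defn_base} holds if and only if, for every minimal non-face $G$ with $G\not\subseteq W$ and every facet $F'$ of $\Delta_W$ with $G\cap W\subseteq F'$, some $j\in G\setminus W$ has $y_j\in\Span(y_i:i\in F')$. (Every facet $F'$ of $\Delta_W$ equals $F\cap W$ for a facet $F$ of $\Delta$.) The idea is to choose $W$ --- a basis of the linear matroid of the $y_i$ selected so that, for each facet $F$, the span of $\{y_i:i\in F\cap W\}$ is as large as possible --- and then verify the criterion using the constraint: for a facet $F$ with $F\cap W=F'$, that constraint supplies $i^{*}\in G$ with $e_{i^{*}}\in P_F$, hence $y_{i^{*}}\in\qq_F:=P_F\cap R$, and the work is to arrange the choice of $W$ so that such an $i^{*}$ lies in $G\setminus W$ and so that $y_{i^{*}}\in\qq_F$ forces $y_{i^{*}}\in\Span(y_i:i\in F')$. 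I anticipate this requiring a genuine exchange/greedy argument on the matroid together with a case split on the sizes of the circuits among the $y_i$; coordinating the choice of $W$ with this verification is, I expect, the main obstacle.

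Finally, once a base $W$ is in hand, enlarging it by any vertices $v$ with $y_v\notin\Span(y_i:i\in W)$ yields a base for which $\{y_i:i\in W\}$ is a basis of $R_1$; then $\phi|_{S_W}\colon S_W\to R$ is an isomorphism carrying $I_W$ onto $J$, so $A\cong S_W/I_W$ with $I_W$ a squarefree monomial ideal, i.e. $A$ is a Stanley-Reisner ring. This proves Theorem~\ref{base_SR}, and hence Theorem~\ref{graded_retracts_of_SR}.
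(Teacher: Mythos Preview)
Your proposal has a genuine gap: the ``remaining case'' (all $y_i\neq 0$ but linearly dependent) is precisely the entire content of the theorem, and you do not prove it---you only outline a matroid-style strategy and explicitly flag it as the main obstacle. Moreover, your reformulation of condition~(ii) is wrong: with $\{y_i:i\in W\}$ a basis of $R_1$, the ideal $\phi(I_WS)$ is the Stanley--Reisner ideal of $\Delta_W$ in $R=k[y_i:i\in W]$, whose primes are $(y_\ell:\ell\in W\setminus F')$; hence $\prod_{i\in G}y_i\in\phi(I_WS)$ requires, for each facet $F'$ of $\Delta_W$ with $G\cap W\subseteq F'$, some $j\in G\setminus W$ with $y_j\in\Span(y_\ell:\ell\in W\setminus F')$, not $\Span(y_i:i\in F')$ as you wrote. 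This matters, because the constraint you extract from $\theta(J)\subseteq I_\Delta$ gives $y_{i^*}\in\qq_F=P_F\cap R$, and there is no a priori relation between $\qq_F$ and either span of $y$'s unless $\phi(P_F)=\qq_F$---which is exactly the compatibility condition you have not arranged.

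The paper's proof is organized quite differently and supplies the missing idea. Rather than inducting on $n$, it inducts on the number $s$ of associated primes in the decomposition $I=\pp_1\cap\cdots\cap\pp_s$. Lemma~\ref{lem_extension} gives $\phi(I)=\bigcap_i\phi(\pp_i)$; then one shows that if some $\pp_i$ fails compatibility (i.e.\ $\phi(\pp_i)\supsetneq\pp_i\cap R$), one can drop a prime and keep $\phi(I)$ unchanged, reducing $s$. In the compatible case $\phi(\pp_i)=\pp_i\cap R$ for all $i$, a separate combinatorial lemma (Lemma~\ref{glueing_basis}) builds the base by a reverse induction on the depth of intersections $\pp_{i_1\ldots i_t}$, choosing at each stage variables whose images complement the spans already obtained. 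Your greedy/exchange sketch does not recover this structure, and without the compatibility reduction the link between $\qq_F$ and the $y$-spans that your verification step needs is simply not available.
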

{\bf Sketch of the proof}: Let $I$ be the ideal defining our Stanley-Reisner ring. Let $I=\pp_1\cap \cdots \cap \pp_s$ be the primary decomposition of $I$. From Lemma \ref{induced_polynomial_retract}, each graded retract of $S/I$ induces a graded retract of polynomial rings $R\hookrightarrow S$. The first step is to  reduce to the case where each of the $\pp_i, i=1,\ldots,s$, is a compatible ideal of the induced algebra retract $R\to S$. The second step is choosing a base for each of the induced algebra retracts corresponding to $\pp_i, i=1,\ldots,s$. If we choose these bases carefully enough, then their {\em union} will be a base for the original retract of $S/I$.

Before going to the proof of \ref{base_SR}, we prove a useful lemma, which was suggested by the referee. We are not aware of any occurrence of this result in the literature. 
\begin{lem}
\label{lem_extension}
Let $f:A\to B$ be a homomorphism of noetherian rings and $I$ a radical ideal of $A$. Let $I=\pp_1\cap \cdots \cap \pp_r$ be the primary decomposition by associated prime ideals of $I$. Assume that $IB$ is a radical ideal of $B$. Then
\[
IB=\pp_1B\cap \cdots \cap \pp_rB.
\]
\end{lem}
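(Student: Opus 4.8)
The plan is to exploit the fact that $IB$ is radical and relate it to the ideal $\pp_1 B \cap \cdots \cap \pp_r B$ by showing they have the same radical. The inclusion $IB \subseteq \pp_i B$ holds for each $i$ simply because $I \subseteq \pp_i$; hence $IB \subseteq \pp_1 B \cap \cdots \cap \pp_r B$. For the reverse inclusion, I would pass to radicals. On the one hand $\sqrt{IB} = IB$ by hypothesis. On the other hand, I claim $\sqrt{\pp_1 B \cap \cdots \cap \pp_r B} = \sqrt{IB}$. To see this, note $I = \pp_1 \cap \cdots \cap \pp_r \subseteq \pp_i$ gives $IB \subseteq \pp_1 B \cap \cdots \cap \pp_r B$, so $\sqrt{IB} \subseteq \sqrt{\pp_1 B \cap \cdots \cap \pp_r B}$. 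Conversely, if $b \in \pp_i B$ for all $i$, then I want to show a power of $b$ lies in $IB$; this is where a little work is needed, since products of the $\pp_i B$ need not be contained in $IB$ without the radical hypothesis.

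Here is the cleaner route I would actually take. Since $I$ is radical with minimal primes among $\pp_1,\dots,\pp_r$, the product satisfies $\pp_1 \pp_2 \cdots \pp_r \subseteq \pp_1 \cap \cdots \cap \pp_r = I$. Extending to $B$, we get $(\pp_1 B)(\pp_2 B)\cdots(\pp_r B) \subseteq IB$. Now take any $b \in \pp_1 B \cap \cdots \cap \pp_r B$. Then $b^r \in (\pp_1 B)(\pp_2 B)\cdots(\pp_r B) \subseteq IB$, so $b \in \sqrt{IB} = IB$, using the hypothesis that $IB$ is radical. This shows $\pp_1 B \cap \cdots \cap \pp_r B \subseteq IB$, and combined with the trivial reverse inclusion we are done. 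Noetherianity is used only to guarantee that the primary decomposition $I = \pp_1 \cap \cdots \cap \pp_r$ exists with finitely many associated primes, all of which are minimal over $I$ since $I$ is radical.

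I expect no serious obstacle here; the only subtlety is remembering that without the assumption ``$IB$ is radical'' the statement is false in general (extension does not commute with intersection of ideals), so the radical hypothesis must be genuinely invoked — which it is, at the very last step when passing from $b^r \in IB$ to $b \in IB$. One should also double-check the direction of the containment $\prod \pp_i \subseteq \bigcap \pp_i$, which holds for any finite collection of ideals, and does not require the $\pp_i$ to be comaximal or the intersection to be irredundant.
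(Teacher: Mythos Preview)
Your proof is correct, and it is genuinely different from the paper's. The paper argues via the primary decomposition of $IB$ in $B$: writing $IB=\qq_1\cap\cdots\cap\qq_{r'}$ with each $\qq_j$ prime, it observes that $\pp_1\cap\cdots\cap\pp_r\subseteq f^{-1}(\qq_j)$, so some $\pp_i\subseteq f^{-1}(\qq_j)$, whence $\pp_iB\subseteq\qq_j$ and $\bigcap_i\pp_iB\subseteq\bigcap_j\qq_j=IB$. Your route is more elementary: you use only the containment $\prod_i\pp_i\subseteq\bigcap_i\pp_i=I$, extend it to $B$, and then invoke the radical hypothesis directly via $b^r\in IB\Rightarrow b\in IB$. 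Your argument has the advantage of not appealing to primary decomposition in $B$ at all, so in fact it does not require $B$ to be noetherian; the paper's proof, by contrast, needs noetherianity of $B$ to decompose $IB$. Both proofs make essential use of the hypothesis that $IB$ is radical, but yours isolates that use in a single transparent step.
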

\begin{proof}
Let $IB=\qq_1\cap \cdots \qq_{r'}$ be the primary decomposition by associated prime ideals of $IB$. Then 
\[
\pp_1\cap \cdots \cap \pp_r \subseteq IB \cap A = f^{-1}(\qq_1)\cap \cdots \cap f^{-1}(\qq_{r'}).
\]
Therefore for each $j\in \{1,\ldots,r'\}$, there exists $1\le i\le r$ such that $\pp_i \subseteq f^{-1}(\qq_j)$. In particular, we obtain the second inclusion in the following display
\[
IB \subseteq  \pp_1B\cap \cdots \cap \pp_rB \subseteq \qq_1\cap \cdots \cap \qq_{r'}=IB.
\]
This concludes the proof.
\end{proof}
\begin{proof}[Proof of Theorem \ref{base_SR}]
Let $S=k[x_1,\ldots,x_n]$ be a standard graded polynomial ring. Let $I$ be a squarefree monomial ideal of $S$. If $n=0$ or $I=0$, there is nothing to do, so we assume that $n\ge 1$ and $I\neq 0$. Note that $I \subseteq \nn^2$ where $\nn=(x_1,\ldots,x_n)$.

Assume that $A$ is a graded algebra retract of $S/I$. Since $A$ is a quotient of $S$, we can write $A=R/J$ where
$R=k[y_1,\ldots,y_m]$ is a standard graded polynomial ring and $J\subseteq (y_1,\ldots,y_m)^2$. We call this the minimal
presentation of $A$. Of course, a minimal presentation of a standard graded $k$-algebra exists and is unique in the sense that
if $R'/J'$ is another minimal presentation then there is an isomorphism $R\cong R'$ mapping $J$ to $J'$.

Obviously, the graded algebra retract $R/J \hookrightarrow S/I \to R/J $ induces a graded retract of polynomial rings
$R\hookrightarrow S \to R$. Thus, we may assume that $y_1,\ldots,y_m \in S_1$. From Lemma \ref{induced_polynomial_retract},  $J=\phi(I)=I\cap R$.

Let the irredundant primary decomposition of $I$ be $I=\pp_1\cap \cdots\cap \pp_s$ where $\pp_i$ are linear ideals
associated to $I$.

{\em Step 1}. Since $S/I$ is reduced, we also have $J=\phi(I)$ is a radical ideal of $R$. Using Lemma \ref{lem_extension} for the surjection $\phi:S\to R$ and the ideal $I$, we obtain the equality
\[
\phi(I)=\phi(\pp_1)\cap \cdots \cap \phi(\pp_s).
\]
Moreover, for each $i=1,\ldots,s$ there exists some $j\in \{1,\ldots,s\}$ such that $\pp_i \cap R \supseteq \phi(\pp_j)$. The last claim follows from the fact that $\phi(\pp_1)\cap \cdots \cap \phi(\pp_s)=I\cap R= (\pp_1\cap R)\cap \cdots \cap (\pp_s\cap R)$ and $\pp_i\cap R$ is a prime ideal for each $1\le i\le s$. 

{\em Step 2}. We will use induction on $s\ge 0$ to finish the proof of the theorem.

If $s=0$ then $I=0$, there is nothing to do. Assume that $s\ge 1$. We will prove the following claim.

\begin{claim*}
It is possible to reduce the general case to the case $\phi(\pp_i)=\pp_i\cap R$ for each associated prime $\pp_i$ of $I$.
\end{claim*}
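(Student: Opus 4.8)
The plan is to establish the Claim by the same induction on $s$ that drives the proof of Theorem~\ref{base_SR}. If every $\pp_i$ is already compatible there is nothing to do, so suppose some $\pp_j$ is not, i.e.\ $\phi(\pp_j)\supsetneq\pp_j\cap R$. The first thing I would extract from Step~1 is that $\phi(\pp_j)$ is then \emph{redundant} in the radical decomposition $J=\phi(\pp_1)\cap\cdots\cap\phi(\pp_s)$ obtained there. Indeed $\phi(\pp_j)$, being generated by linear forms of the polynomial ring $R$, is a prime of $R$; the index $j'$ furnished by Step~1 with $\phi(\pp_{j'})\subseteq\pp_j\cap R$ cannot equal $j$ (otherwise $\phi(\pp_j)\subseteq\pp_j\cap R$, contradicting non-compatibility), so $\bigcap_{i\neq j}\phi(\pp_i)\subseteq\phi(\pp_{j'})\subseteq\phi(\pp_j)$. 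Feeding this back, set $I':=\bigcap_{i\neq j}\pp_i$, the Stanley--Reisner ideal of $\Delta$ with the facet $F_j$ deleted. Then $\phi(I')\subseteq\bigcap_{i\ne j}\phi(\pp_i)=J$ and $J=I\cap R\subseteq I'\cap R\subseteq\phi(I')$, hence $\phi(I')=I'\cap R=J$. So $I'$ is again a compatible ideal, $A=R/J$ is a graded algebra retract of $S/I'$ via the descent of $\phi$, and $I'$ has exactly $s-1$ associated primes.

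If $I'\subseteq\nn^2$ I would apply the induction hypothesis directly to $A\to S/I'$. Otherwise $I'$ acquires linear generators $x_\ell$ — exactly the variables lying only in $F_j$, which then belong to $\bigcap_{i\ne j}\pp_i=I'$ and so have $\phi(x_\ell)\in J\subseteq\mm^2$, forcing $\phi(x_\ell)=0$ — and after discarding these variables (the passage to a smaller vertex set in the Remark after Lemma~\ref{def-ideal}) we obtain a graded algebra retract $A\to k[\Delta']=S''/I''$ of a standard graded Stanley--Reisner ring with $I''\subseteq(\nn'')^2$ and at most $s-1$ associated primes. Iterating, we are reduced to an all-compatible retract, which is the content of the Claim; chaining with the second step of the proof of Theorem~\ref{base_SR} then furnishes a base $W''$, and this $W''$ has to be promoted to a base of the original retract $A\to S/I$. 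Condition~(i) of Definition~\ref{defn_base} is inherited verbatim, since the linear forms $\phi(x_i)$ are never altered, so the point is condition~(ii), $\phi(I)=\phi(I_{W''}S)$.

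I expect this base-transfer to be the main obstacle. The inclusion $\phi(I_{W''}S)\subseteq\phi(I)=J=\phi(I'_{W''}S)$ is automatic, but equality forces $I_{W''}=I'_{W''}$, i.e.\ $W''$ must support no monomial of $I'$ outside $I$; these are precisely the monomials supported on a subset of $W''\cap F_j$ (for instance $\prod_{\ell\in F_j}x_\ell$), so the base coming out of the reduced problem has to be chosen with a suitable minimality property. Accordingly I would run the induction not for Theorem~\ref{base_SR} verbatim but for a strengthened statement that produces a base $W$ along which $I_W$ sits inside $I$ controlledly (morally: the monomials of $I$ supported on $W$ remain among the minimal generators of $I$), and then verify that this property survives both the deletion of $F_j$ and the vertex-trimming; identifying the correct form of this strengthening is where the real work lies. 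A secondary, harmless point to keep track of is that after trimming, $R$ is only abstractly — not literally — a subring of $S''$, since $\phi(x_v)$ may still involve the discarded variables; this does not affect the identifications $I'_{W''}=I''_{W''}$ and $\phi|_{S''}=\phi''$ used above.
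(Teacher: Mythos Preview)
Your reduction coincides with the paper's: drop a non-compatible prime $\pp_j$, set $I'=\bigcap_{i\neq j}\pp_i$, verify $\phi(I')=J$, and appeal to the induction hypothesis for the retract $R/J\to S/I'$. The paper then writes a single sentence --- ``This base is also a base for $\theta$'' --- and declares the reduction complete, whereas you isolate the base-transfer $\phi(I_{W}S)\stackrel{?}{=}J$ as the crux and correctly observe that (via injectivity of $\phi|_{S_W}$) it amounts to $I_W=I'_W$. Your caution is warranted: the paper's sentence is not true for an arbitrary base of the $I'$-retract. For the $5$-cycle with facets $\{1,2\},\{1,3\},\{2,4\},\{3,5\},\{4,5\}$, take $R=k[x_1,x_4]\subset S$ and $\phi$ given by $x_1\mapsto x_1$, $x_2\mapsto x_4$, $x_3\mapsto x_1$, $x_4\mapsto x_4$, $x_5\mapsto 0$. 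Then $J=(x_1x_4)$, the prime $\pp_1=(x_3,x_4,x_5)$ is non-compatible, and $W=\{1,2\}$ is a legitimate base for the $I'$-retract (since $x_1x_2\in I'$ and $\phi(x_1x_2)=x_1x_4$ generates $J$); yet $I_W=0$ because $\{1,2\}=F_1$ is a face of $\Delta$, so $\phi(I_WS)=0\neq J$ and $W$ is \emph{not} a base for $\theta$. Thus the paper's proof of the Claim, read literally, has exactly the gap you flag. (Other choices of base, e.g.\ $\{1,4\}$ or $\{3,4\}$, do transfer, so Theorem~\ref{base_SR} itself is not in jeopardy.) Your treatment of possible linear generators in $I'$ is likewise more scrupulous than the paper's, which silently assumes $I'\subseteq\nn^2$.

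Where your proposal stops short is that you do not actually produce the strengthened inductive statement you call for --- you only announce that ``identifying the correct form of this strengthening is where the real work lies.'' Two natural repairs present themselves. Either downgrade the induction to Theorem~\ref{graded_retracts_of_SR} directly: a base $W$ for the $I'$-retract already yields $R/J\cong S_W/I'_W$, a Stanley--Reisner ring, which is all the main theorem requires, and no transfer is needed. Or, if you want Theorem~\ref{base_SR} as stated, carry the full conclusion of Lemma~\ref{glueing_basis} --- the per-prime description $\phi(\pp_i)=(\phi(x_g):g\in L,\ x_g\in\pp_i)$ --- through the induction as the strengthened hypothesis, and then verify that such an $L$ does satisfy $I_L=I'_L$. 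Either way, the argument at this step is incomplete both in your proposal and in the paper.
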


Of course $\pp_i\cap R \subseteq \phi(\pp_i)$ for every $i$. So we assume that $\phi(\pp_i)\nsubseteq \pp_i\cap R$ for some $i$, say $i=1$. By Step 1, there exists $i\neq 1$ such that $\phi(\pp_i)\subseteq \pp_1\cap R$. In particular, $\phi^2(\pp_i)=\phi(\pp_i)\subseteq \phi(\pp_1)$. 

Consider the ideal $I'=\bigcap_{j=2}^s\pp_j$. Now we have
\[
\phi(I') \subseteq \bigcap_{j=2}^s\phi(\pp_j) = \bigcap_{j=1}^s\phi(\pp_j) =\phi(I) \subseteq \phi(I').
\]
The second equality holds since $\phi(\pp_i)\subseteq \phi(\pp_1)$ while the last inclusion holds since $I\subseteq I'$. By the induction hypothesis on $s$, there is a base for the algebra retract
 $R/\phi(I)=R/\phi(I')\to S/I'$. This base is also a base for $\theta$. Hence we can make the reduction in the claim.

{\em Step 3}. Consider the case $\phi(\pp_i)=\pp_i\cap R$ for $i=1,\ldots,s$. From Lemma \ref{glueing_basis} below, there exists a subset
 $L=\{x_{g_1},\ldots,x_{g_p}\}$ of $\supp(I)$ such that:
\begin{enumerate}
\item $\phi(x_{g_1}),\ldots,\phi(x_{g_p})$ are linearly independent over $k$,
\item $\phi(\pp_i)=(\phi(x_{g_j}):x_{g_j} \in \pp_i)$, for $i=1,\ldots,s$.
\end{enumerate}
In fact, we only use the case $t=1$ of condition (ii) in Lemma \ref{glueing_basis}.
 It is not hard to verify that $L$ is a base for $\theta$.

Hence we finish the induction on $s$. This concludes the proof.
\end{proof}
It remains to establish the following lemma.
\begin{lem}
\label{glueing_basis}
Let $R\hookrightarrow S=k[x_1,\ldots,x_n]$ be a graded algebra retract of standard graded polynomial rings over $k$, and  $\phi:S\to R$ a retraction map.

Let $I$ be a squarefree monomial ideal of $S$ with primary decomposition $I=\pp_1\cap \cdots \cap \pp_s$, where $s\ge 1$.
If $\phi(\pp_i)=\pp_i\cap R$ for $i=1,\ldots,s$, then there exists a subset $L=\{x_{g_1},\ldots,x_{g_p}\}$ of $\supp(I)$ such that:
\begin{enumerate}
\item The vectors $\phi(x_{g_1}),\ldots,\phi(x_{g_p})$ are linearly independent over $k$,
\item $\phi(\pp_{i_1\ldots i_t})=(\phi(x_{g_j}):x_{g_j} \in \pp_{i_1\ldots i_t}),$
for every sequence $i_1,\ldots,i_t$ with $1\le i_1<\cdots<i_t\le s$.
\end{enumerate}
Here we denote $\pp_{i_1\ldots i_t}=(x_j: x_j\in \pp_{i_1}\cap \cdots \cap \pp_{i_t})$ for each increasing sequence $i_1,\ldots,i_t$
in $\{1,\ldots,s\}$.
\end{lem}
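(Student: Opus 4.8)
The plan is to pass to degree one and turn the statement into a linear-algebra assertion about the projection $\phi_1$ that one can settle by sweeping over the meet-semilattice generated by the $\pp_i$. Write $\pp_i=(x_j:j\in T_i)$ and, for $T\subseteq[n]$, put $U_T=\Span_k\{x_j:j\in T\}\subseteq S_1$. Since $\phi$ is graded and restricts to the identity on $R$, its degree-one part $\phi_1\colon S_1\to S_1$ is a linear projection with image $W:=R_1$, so $S_1=W\oplus K$ with $K=\ker\phi_1$. Comparing degree-one parts in $\phi(\pp_i)=\pp_i\cap R$ gives $\phi_1(U_{T_i})=U_{T_i}\cap W$; in particular each $U_{T_i}$ is $\phi_1$-invariant. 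Also, for an increasing sequence $\sigma=\{i_1<\dots<i_t\}$ one has $\pp_{i_1\dots i_t}=(x_j:j\in T_\sigma)$ where $T_\sigma:=\bigcap_{i\in\sigma}T_i$, so $\phi(\pp_{i_1\dots i_t})$ is the ideal of $R$ generated by the subspace $\phi_1(U_{T_\sigma})\subseteq W$. Hence condition~(ii) of the lemma is equivalent to requiring that $\{\phi_1(x_j):j\in L\cap T_\sigma\}$ span $\phi_1(U_{T_\sigma})$ for every nonempty $\sigma$, and condition~(i) to requiring that $\{\phi_1(x_j):j\in L\}$ be linearly independent over $k$.

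Next I would assemble the combinatorial input. The family $\mathcal{T}=\{T_\sigma:\emptyset\neq\sigma\subseteq[s]\}$ is closed under intersection, since $T_\sigma\cap T_\tau=T_{\sigma\cup\tau}$, and has least element $T_{[s]}$; and because $U_{T_\sigma}=\bigcap_{i\in\sigma}U_{T_i}$, each $U_{T_\sigma}$ is again $\phi_1$-invariant, so $\phi_1(U_{T_\sigma})=W\cap U_{T_\sigma}$. Assuming (as one always may, and as in the application) that the decomposition is irredundant, so the $\pp_i$ are the associated primes of $I$, I claim $\supp(I)=\bigcup_i T_i=\bigcup_{T\in\mathcal{T}}T$: the nontrivial inclusion $\supseteq$ follows because irredundancy gives, for each $i_0$, a squarefree monomial $g\in\bigcap_{i\neq i_0}\pp_i\setminus\pp_{i_0}$, and then for $j\in T_{i_0}$ the monomial $x_jg$ lies in $I$ while any minimal generator of $I$ dividing $x_jg$ must be divisible by $x_j$ (otherwise it divides $g$, forcing $g\in\pp_{i_0}$). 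Consequently every subset of variables contained in members of $\mathcal{T}$ automatically lies in $\supp(I)$, so the requirement $L\subseteq\supp(I)$ costs nothing.

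I would then build $L$ greedily. Fix a linear order $T^{(1)},\dots,T^{(N)}$ of $\mathcal{T}$ refining inclusion, so $T^{(1)}=T_{[s]}$. Starting from $L=\emptyset$, when $T^{(a)}$ is reached, repeatedly adjoin to $L$ a variable $x_j$ with $j\in T^{(a)}$ and $\phi_1(x_j)\notin\Span_k\{\phi_1(x_\ell):\ell\in L\cap T^{(a)}\}$, until $L\cap T^{(a)}$ spans $\phi_1(U_{T^{(a)}})$. Throughout one maintains: (I1) $\{\phi_1(x_\ell):\ell\in L\}$ is linearly independent; (I2) $\Span_k\{\phi_1(x_\ell):\ell\in L\cap T^{(b)}\}=\phi_1(U_{T^{(b)}})$ for every already-processed $T^{(b)}$; (I3) $\Span_k\{\phi_1(x_\ell):\ell\in L\}=\sum_{b}\phi_1(U_{T^{(b)}})$, the sum over processed indices. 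When $\mathcal{T}$ is exhausted, (I1) and (I2) are exactly conditions (i) and (ii), so $L$ is a base.

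The one genuine difficulty is verifying that adjoining variables while processing $T^{(a)}$ cannot break (I1); this is precisely where $\phi_1$-invariance of the $U_{T_\sigma}$ is used. The crux is the ``modular'' identity
\[
\Big(\sum_{b<a}\phi_1(U_{T^{(b)}})\Big)\cap\phi_1(U_{T^{(a)}})=\Span_k\{\phi_1(x_\ell):\ell\in L\cap T^{(a)}\}.
\]
For its proof, invariance makes $\sum_b U_{T^{(b)}}$ split along $S_1=W\oplus K$, giving $\sum_b\phi_1(U_{T^{(b)}})=W\cap\sum_b U_{T^{(b)}}$; intersecting with $W\cap U_{T^{(a)}}$ and using that coordinate subspaces of $S_1$ form a distributive lattice rewrites the left side as $W\cap\sum_b U_{T^{(b)}\cap T^{(a)}}=\sum_b\phi_1(U_{T^{(b)}\cap T^{(a)}})$, and since each $T^{(b)}\cap T^{(a)}\in\mathcal{T}$ is processed before $T^{(a)}$, invariant (I2) identifies this with $\Span_k\{\phi_1(x_\ell):\ell\in L\cap T^{(a)}\}$. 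Granting the identity: if $j\in T^{(a)}$ then $\phi_1(x_j)\in\phi_1(U_{T^{(a)}})$ by invariance, so $\phi_1(x_j)\notin\Span_k\{\phi_1(x_\ell):\ell\in L\cap T^{(a)}\}$ forces $\phi_1(x_j)\notin\Span_k\{\phi_1(x_\ell):\ell\in L\}$, preserving (I1). Preservation of (I2) and (I3), and the observation that every variable added for $T^{(a)}$ lies outside all strictly smaller members of $\mathcal{T}$, then follow routinely from (I2), completing the construction.
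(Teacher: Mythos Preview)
Your proof is correct and follows essentially the same strategy as the paper's: both reduce to degree one, exploit the $\phi_1$-invariance of the coordinate subspaces $U_{T_\sigma}$, and build $L$ by sweeping the meet-semilattice from small elements to large (the paper by reverse induction on the length $t$ of the index sequence, you by a linear extension of inclusion on $\mathcal{T}$). Your writeup is more explicit at the two points the paper leaves to the reader---the ``modular identity'' that preserves linear independence, and the check that $\bigcup_i T_i\subseteq\supp(I)$ via irredundancy---while the paper simply asserts that verifying the properties of $L_t$ is ``easy to check''.
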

\begin{proof}
Observe that from the hypothesis, for any sequence $1\le i_1 <\cdots <i_t \le s$, we have
\[
\phi\left(\bigcap_{j=1}^t\pp_{i_j}\right)= \bigcap_{j=1}^t\phi(\pp_{i_j}).
\]
This is true because
\[
\phi\left(\bigcap_{j=1}^t\pp_{i_j}\right) \subseteq \bigcap_{j=1}^t\phi(\pp_{i_j}) = \bigcap_{j=1}^t (\pp_{i_j}\cap R)=\left(\bigcap_{j=1}^t \pp_{i_j}\right)\cap R \subseteq \phi\left(\bigcap_{j=1}^t\pp_{i_j}\right).
\]
We use reverse induction on $t$ with $1\le t \le s$ to show that there exists a subset $L_t$ of $\bigcup_{1\le i_1<\cdots<i_t\le s}\supp(\pp_{i_1\ldots i_t})$ such that:
\begin{enumerate}
\item The vectors $\phi(x_i)$ (where $x_i\in L_t$) are linearly independent over $k$,
\item $\phi(\pp_{i_1\ldots i_r})=(\phi(x_j):x_j \in L_t \cap \pp_{i_1\ldots i_r}),$
for every sequence $i_1,\ldots,i_r$ with $r\ge t$ and $1\le i_1<\cdots<i_r\le s$.
\end{enumerate}
Note that after finishing this induction, we set $L=L_1$ and get the conclusion of the lemma.

Consider the starting case $t=s$. Since $\phi(\pp_i)\subseteq \pp_i$ for $i=1,\ldots,s$, we get $\phi(\pp_{i_1\ldots i_r}) \subseteq
\pp_{i_1\ldots i_r}$ for each sequence $1\le i_1<\cdots < i_r \le s$. In detail, this is because
\[
\phi(\pp_{i_1\ldots i_r}) \subseteq \pp_{i_1}\cap \cdots \cap \pp_{i_r},
\]
and $\phi(\pp_{i_1\ldots i_r})$ is generated by linear forms.

Hence $\phi(\pp_{12\cdots s})\subseteq \pp_{12\cdots s}$. It is enough to choose $L_s$ to be the subset of $\{x_i:x_i\in \pp_{12\cdots s}\}$ such that the vectors $\{\phi(x_i): x_i \in L_s\}$ is a $k$-basis for $\phi(\pp_{12\cdots s})$ in degree $1$.

Assume that the statement is true for $t+1$, so we have a set $L_{t+1}$ with suitable properties. For each increasing sequence of $t$ elements $i_1,\ldots,i_t$, we denote $V_{i_1\ldots i_t}=\Span (\phi(x_j):x_j\in \pp_{i_1}\cap \cdots \cap \pp_{i_t}) \subseteq R_1$. Observe that
\[
V_{i_1\ldots i_t}=\left[\phi\left(\bigcap_{r=1}^t\pp_{i_r}\right)\right]_1=\left[\bigcap_{r=1}^t \phi(\pp_{i_r})\right]_1,
\] 
where for a $\Z$-graded $R$-module $M$, $[M]_1$ denotes the $k$-vector space spanned by elements of degree $1$ of $M$.

Let $B_{i_1\ldots i_t}$ be the set of variables
 in \[
\pp_{i_1\ldots i_t}\setminus \bigcup_{j\notin \{i_1,\ldots,i_t\}} \pp_{j}.
\]
Let $C_{i_1\ldots i_t}$ be a minimal subset of $B_{i_1\ldots i_t}$ such that
$\Span(\phi(x_i):x_i\in C_{i_1\ldots i_t})$ is a complement vector space for
\[
\sum_{\{i_1,\ldots,i_t\}\subsetneq \{j_1,\ldots,j_{t+1}\}} V_{j_1,\ldots,j_{t+1}}
\]
in $V_{i_1\ldots i_t}$.

Let $L_t =\left(\cup_{i_1,\ldots,i_t}  C_{i_1\ldots i_t}\right) \cup L_{t+1} $. It is easy to check that $L_t$ satisfies all the stated requirements. 
\end{proof}
In combination with Corollary \ref{StanleyReisnerretract}, we can classify all $\Z$-graded algebra retracts of standard graded Stanley-Reisner rings.

Given two simplicial complexes $\Delta, \Delta'$ on the vertex sets $V$ and $V'$, a morphism $f$ from $\Delta$ to $\Delta'$ is a map
$f: V \to V'$ such that the image of a face of $\Delta$ is a face of $\Delta'$. We say $\Delta$ and $\Delta'$ are {\em isomorphic simplicial
complexes} if there exist morphisms $f:\Delta \to \Delta'$ and $g: \Delta' \to \Delta$ such that $f:V\to V'$ is a bijection with inverse
 $g:V' \to V$.
\begin{cor}
Let $\Delta$ be a simplicial complex on $[n]$ and $k[\Delta]$ the standard graded Stanley-Reisner ring of $\Delta$. Every $\Z$-graded algebra retract of $k[\Delta]$ is isomorphic as $\Z$-graded $k$-algebras to one of the rings $k[\Delta_W]$, where $W$ is
a subset of $[n]$.
\end{cor}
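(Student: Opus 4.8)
The plan is to run the argument already assembled around Theorem~\ref{base_SR} and then identify the ring obtained with a restriction of $\Delta$. Write $k[\Delta]=S/I$ with $S=k[x_1,\ldots,x_n]$ and $I=I_\Delta$ the (squarefree) Stanley--Reisner ideal, and let $A$ be a $\Z$-graded algebra retract of $k[\Delta]$. Since $I\subseteq\nn^2$, Lemma~\ref{induced_polynomial_retract} applies to the minimal presentation $A=R/J$, giving an induced graded algebra retract $R\hookrightarrow S\to R$ with retraction $\phi$ such that $\phi(I)=J=I\cap R$, and we may take $R=k[y_1,\ldots,y_m]$ with $y_1,\ldots,y_m\in S_1$. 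First I would invoke Theorem~\ref{base_SR} to get a base $W_0\subseteq[n]$ of $\theta$, and then enlarge $W_0$ to a set $W\subseteq[n]$ with $|W|=m$ exactly as in the observation following Definition~\ref{defn_base}: this is possible because $\phi$ is graded and surjective, so $\{\phi(x_i):i\in[n]\}$ spans $R_1$, hence $\{\phi(x_i):i\in W_0\}$ can be completed to a $k$-basis of $R_1$; and condition (ii) of Definition~\ref{defn_base} persists because for any $W\supseteq W_0$ one has $\phi(I)=\phi(I_{W_0}S)\subseteq\phi(I_WS)\subseteq\phi(I)$.

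Once $W$ is fixed, the restriction $\phi|_{S_W}\colon S_W\to R$, $x_i\mapsto\phi(x_i)$, sends the basis $\{x_i\}_{i\in W}$ of $(S_W)_1$ to the basis $\{\phi(x_i)\}_{i\in W}$ of $R_1$, so it is a graded isomorphism of polynomial rings. It carries $I_W$ onto $\phi(I_W)=\phi(I_WS)=\phi(I)=J$ (here $\phi(I_W)=\phi(I_WS)$ since $\phi(I_W)$ is already an ideal of $R$, as $\phi|_{S_W}$ is surjective), hence descends to a $\Z$-graded $k$-algebra isomorphism $S_W/I_W\cong R/J=A$.

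It then remains to recognize $S_W/I_W$ as $k[\Delta_W]$ for the restriction $\Delta_W=\{F\in\Delta:F\subseteq W\}$. Because $I=I_\Delta$ is squarefree, $I_W$ is generated by the squarefree monomials $\prod_{i\in F}x_i$ with $F\subseteq W$ and $\prod_{i\in F}x_i\in I_\Delta$, i.e.\ with $F\notin\Delta$; these are exactly the generators of $I_{\Delta_W}$, so $I_W=I_{\Delta_W}$ and $A\cong k[\Delta_W]$ as $\Z$-graded $k$-algebras. One should check in passing that $\Delta_W$ is indeed a simplicial complex on vertex set $W$: it contains every singleton $\{i\}$, $i\in W$, since $\{i\}\in\Delta$, and it is closed under subsets because $\Delta$ is. Combining with Corollary~\ref{StanleyReisnerretract}, which shows each $k[\Delta_W]$ really does occur as a ($\Z^n$-graded, a fortiori $\Z$-graded) retract of $k[\Delta]$, one gets that the rings $k[\Delta_W]$ are, up to $\Z$-graded isomorphism, precisely the $\Z$-graded algebra retracts of $k[\Delta]$.

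I do not expect a genuine obstacle: the substantive work has already been done in Theorem~\ref{base_SR} and in constructing a base. The only points needing care are the degree-preservation of $\phi|_{S_W}$, which is immediate since $\phi$ is graded, and the purely combinatorial identification $I_W=I_{\Delta_W}$, which is where squarefreeness of $I$ is used.
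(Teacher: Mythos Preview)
Your proof is correct and follows essentially the same approach as the paper: invoke Theorem~\ref{base_SR} to obtain a base, enlarge it as in the observation after Definition~\ref{defn_base} to get $R/J\cong S_W/I_W$, and then identify $S_W/I_W$ with $k[\Delta_W]$. The paper's proof is a one-line citation of Theorem~\ref{base_SR}; you have simply spelled out the details of that observation and added the (strictly speaking unnecessary, but pleasant) reverse direction via Corollary~\ref{StanleyReisnerretract}.
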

\begin{proof}
This is clear from Theorem \ref{base_SR}, namely if $W \subseteq \supp(I_{\Delta})$ is a base of a graded
algebra retract $R$ of $S/I_{\Delta}$ then $R\cong k[\Delta_W]$. 
\end{proof}
Note that by \cite[Main Theorem]{BG96}, for two subsets $W_1, W_2$ of $[n]$, the $\Z$-graded $k$-algebras $k[\Delta_{W_1}]$ and $k[\Delta_{W_2}]$ are isomorphic if and only if $\Delta_{W_1}$ and $\Delta_{W_2}$ are isomorphic as simplicial complexes.

\section{Monomial quotient rings}
\label{monomial_quotient_rings}

In this section, we study the graded algebra retracts of monomial quotients of standard graded polynomial rings over $k$. We use the notations of Section
\ref{base_of_a_retraction}. The results in Section \ref{base_of_a_retraction} and in this section support the following conjecture.

\begin{conj}
\label{retracts_of_monomial_quotient_rings}
Let $I$ be a monomial ideal in the standard graded polynomial ring $S=k[x_1,\ldots,x_n]$, where $I\subseteq (S_+)^2$. Then every graded algebra retract of $S/I$ has a base.
\end{conj}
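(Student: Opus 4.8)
The plan is to imitate the strategy behind the proof of Theorem~\ref{base_SR}. By Lemma~\ref{induced_polynomial_retract} (applicable since $I\subseteq\nn^2$), after replacing $A$ by its minimal presentation $A=R/J$ with $J\subseteq\mm^2$ we may lift the given graded algebra retract $\theta\colon R/J\to S/I$ to a graded algebra retract of polynomial rings $R\hookrightarrow S$ with retraction $\phi\colon S\to R$ satisfying $\phi(I)=J=I\cap R$. The task then reduces to exhibiting a subset $W\subseteq[n]$ such that $\{\phi(x_i):i\in W\}$ is linearly independent over $k$ and $\phi(I)=\phi(I_WS)$, i.e.\ a base in the sense of Definition~\ref{defn_base}.

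First I would write $I=\qq_1\cap\cdots\cap\qq_s$ as an irredundant intersection of irreducible monomial ideals, each of the form $\qq_i=(x_j^{a_{ij}}:j\in F_i)$ for some $F_i\subseteq[n]$ and positive exponents $a_{ij}$. The squarefree proof rested on two features: (a) $\phi(I)$ is again radical, so Lemma~\ref{lem_extension} yields $\phi(I)=\bigcap_i\phi(\pp_i)$ and permits the reduction to the compatible case $\phi(\pp_i)=\pp_i\cap R$; and (b) once every associated prime is compatible, the linear structure of the $\pp_i$ turns the construction of a common base (Lemma~\ref{glueing_basis}) into a purely linear-algebra matter of choosing complementary subspaces of $R_1$ indexed by the intersection pattern of the $\pp_i$. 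The general plan would mirror this in three steps: (Step~1) reduce to the situation where each irreducible component $\qq_i$ is ``compatible'' with $\phi$; (Step~2) settle the single-component case of an irreducible monomial ideal $\qq=(x_j^{a_j}:j\in F)$ directly, which is the content of Theorem~\ref{irreducible_monomial_quotients} and where $\chara k$ enters; and (Step~3) glue the bases obtained for the various $\qq_i$ via a complement-of-subspaces argument in $R_1$ as in Lemma~\ref{glueing_basis}, now arranging that the combinatorics of which chosen variable $x_{g_j}$ lies in which $\qq_i$ is compatible with the exponent data.

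The main obstacle — and the reason this remains a conjecture — is that for a non-squarefree monomial ideal $I$ the ideal $\phi(I)$ need not be a monomial ideal, nor radical, so the clean primary-decomposition machinery breaks down at the first step. Lemma~\ref{lem_extension} no longer applies, and I am not aware of a substitute that would guarantee $\phi(I)=\bigcap_i\phi(\qq_i)$; one would need a genuine understanding of when a retraction respects a given irreducible decomposition. In Step~2, even $S/(x_1^{a_1},\dots,x_r^{a_r})$ is delicate: a retract may send a variable to a $k$-linear combination $\sum_j c_jx_j$, and whether $(\sum_j c_jx_j)^a$ lies in the ideal depends on multinomial coefficients, hence on $\chara k$ — precisely why the partial results of Section~\ref{monomial_quotient_rings} are split into one statement for irreducible monomial ideals and one (Proposition~\ref{power_of_linear_ideals}) for powers of linear ideals. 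I would therefore expect a complete proof to require an idea beyond the ``decompose, make compatible, glue'' template: most plausibly an intrinsic description of which monomial quotient rings arise as graded retracts, enriching the base formalism so that it tracks exponents and not merely supports.
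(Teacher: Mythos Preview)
This statement is labeled a \emph{Conjecture} in the paper, and the paper does not prove it; it only offers partial evidence (Theorem~\ref{irreducible_monomial_quotients} and Proposition~\ref{power_of_linear_ideals}). Your proposal is therefore not to be compared against a proof in the paper, because none exists.

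That said, your proposal is honest about this: you do not claim a proof, but rather sketch the natural strategy (imitate Theorem~\ref{base_SR} via irreducible decomposition, compatibility reduction, and a gluing lemma) and then correctly pinpoint where it breaks. Your diagnosis of the obstruction is accurate and matches the paper's own framing: for non-squarefree $I$ the image $\phi(I)$ need not be radical, so Lemma~\ref{lem_extension} is unavailable and the reduction to compatible components in Step~1 has no analogue; the single-component irreducible case is exactly Theorem~\ref{irreducible_monomial_quotients}, where the characteristic-dependent multinomial argument appears; and Proposition~\ref{power_of_linear_ideals} handles the other extreme. Your remark that a full proof likely needs an idea beyond ``decompose, make compatible, glue'' is a fair summary of why the problem is stated as a conjecture.

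In short: there is no gap to name because you have not asserted a proof, and your analysis of the difficulty is consistent with the paper's own stance.
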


\begin{rem}
If the conjecture were true, then every graded algebra retract of a standard graded monomial quotient ring $S/I$ is isomorphic
to one of the rings $S_W/I_W$ where $W$ is a subset of $[n]$.
\end{rem}

\begin{rem}
For $\Z^n$-graded algebra retracts $R/J\hookrightarrow S/I$ where $R$ is multigraded polynomial subring of $S$, the conjecture is true. The detailed argument is left to the reader.
\end{rem}

We call a monomial ideal $I\neq (1)$ an {\em irreducible monomial ideal} if $I$ is generated by powers of variables. In other words, $I=(x_{i_1}^{d_{i_1}},\ldots,x_{i_s}^{d_{i_s}})$
where $1\le i_1 <\cdots <i_s\le n$ and all $d_{i_j} \ge 1$ (we allow the case where $s=0$ and $I=(0)$). We will prove Conjecture \ref{retracts_of_monomial_quotient_rings} in several
situations, where $I$ is either an irreducible monomial ideal or a power of some linear ideal.

It is worth mentioning that every monomial ideal has a unique irredundant irreducible decomposition. Indeed, it is enough to use the following observation: if $a,b$ are monomials of $S$ with $\gcd(a,b)=1$ and $J$ is a monomial ideal then
\[
(ab)+J =((a)+J) \cap ((b)+J).
\]
From this observation, it is clear that our use of the word ``irreducible'' coincides with standard terminology. The reader may wish to consult \cite[§1.3]{HerH} for a precise discussion of irreducible decompositions of monomial ideals.

Now we prove the main result of this section.
\begin{thm}
\label{irreducible_monomial_quotients}
If $I=(x_{i_1}^{d_{i_1}},\ldots,x_{i_s}^{d_{i_s}})$ (where $1\le i_1 <\cdots <i_s\le n$ and all $d_{i_j} \ge 2$) is an irreducible monomial ideal of $S$ then Conjecture \ref{retracts_of_monomial_quotient_rings} holds for $S$ and $I$.
\end{thm}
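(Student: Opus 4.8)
The strategy is to reduce, via Lemma~\ref{induced_polynomial_retract}, to a graded retract of polynomial rings $\phi\colon S=k[x_1,\dots,x_n]\to R$ with $\phi(I)=J=I\cap R$, and then to produce a base $W\subseteq\supp(I)=\{x_{i_1},\dots,x_{i_s}\}$ of $\theta$. Since $I$ is irreducible, $S/I\cong k[x_{i_1},\dots,x_{i_s}]/(x_{i_1}^{d_{i_1}},\dots,x_{i_s}^{d_{i_s}})\otimes_k k[x_j: j\notin\{i_1,\dots,i_s\}]$, so by Example~\ref{tensor_product} we may as well assume $\supp(I)=[n]$, i.e. $I=(x_1^{d_1},\dots,x_n^{d_n})$ with all $d_j\ge 2$. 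Write $m=\dim_k R_1$ and choose linear forms $\ell_j:=\phi(x_j)\in R_1$. The condition for $W$ to be a base is that $\{\ell_j: j\in W\}$ is a $k$-basis of $R_1$ (after enlarging $R$ we want $|W|=m$) and that $\phi(I)=\phi(I_WS)$ where $I_W=(x_j^{d_j}: j\in W)$. So the real task is: \emph{find a subset $W\subseteq[n]$ of size $m$ such that the $\ell_j$, $j\in W$, are linearly independent and $\phi(I)=(\ell_j^{d_j}:j\in W)$.}

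**Key steps.** First I would analyze $J=\phi(I)$. One has $\phi(x_j^{d_j})=\ell_j^{d_j}$, so $J=(\ell_1^{d_1},\dots,\ell_n^{d_n})R$, an ideal generated by powers of (not necessarily distinct, not necessarily independent) linear forms. Because $I$ is $I\cap R=J$, the inclusion $R\hookrightarrow S$ identifies $R/J$ with a subalgebra of $S/I$; in particular $R/J$ is an Artinian graded $k$-algebra that is a retract of the complete intersection Artinian algebra $S/I$. The heart of the matter is the following linear-algebra/commutative-algebra statement about ideals generated by powers of linear forms: if $\ell_1,\dots,\ell_n\in R_1$ span $R_1$ and $J=(\ell_1^{d_1},\dots,\ell_n^{d_n})$ with $R/J$ Artinian, then one can select $m=\dim R_1$ of the forms, say $\ell_{g_1},\dots,\ell_{g_m}$, that are linearly independent and already generate $J$, i.e. $J=(\ell_{g_1}^{d_1},\dots,\ell_{g_m}^{d_m})$. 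This is where I expect the characteristic hypothesis to enter: over a field of characteristic $0$ (or large enough), ideals of powers of linear forms are governed by apolarity/Fröberg-type results and an inclusion-exclusion on "fat points", and the redundancy of a form $\ell_i$ that is a linear combination of a chosen basis is forced; in small positive characteristic a Frobenius coincidence like $\ell^{p}=(\text{linear})$ can create genuinely new relations (cf. the example $\operatorname{char}k=5$, $I=(x^5,y^5,z^5)$, $t\mapsto x+y$ in the text), and one must check the exponents $d_j$ are incompatible with such accidents — this is precisely why the theorem is stated with a remark that $\operatorname{char}k$ is used.

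**Carrying it out.** Concretely I would argue by induction on the number of "redundant" generators. If $\ell_1,\dots,\ell_n$ are already linearly independent (so $n=m$), take $W=[n]$ and we are done. Otherwise some $\ell_i$, say $\ell_n$, lies in $\operatorname{Span}(\ell_1,\dots,\ell_{n-1})$; write $\ell_n=\sum_{j<n}c_j\ell_j$. I claim $\ell_n^{d_n}\in(\ell_1^{d_1},\dots,\ell_{n-1}^{d_{n-1}})$, so that dropping $x_n$ changes neither $\phi(I)$ nor the property of being a retract, and then $I':=(x_1^{d_1},\dots,x_{n-1}^{d_{n-1}},x_n^{d_n})$ may be replaced by $(x_1^{d_1},\dots,x_{n-1}^{d_{n-1}})$ after one checks the retract still restricts correctly (here one uses that $x_n^{d_n}-\ell_n^{d_n}$ maps into $\phi(I)$ under $\phi$, and a degree/compatibility argument in the spirit of Step~2 of the proof of Theorem~\ref{base_SR}). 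The claim $\ell_n^{d_n}\in(\ell_j^{d_j}:j<n)$ when $\ell_n$ is a linear combination of the $\ell_j$ is the crux: expanding $\ell_n^{d_n}=\big(\sum c_j\ell_j\big)^{d_n}$ by the multinomial theorem, every monomial $\prod\ell_j^{a_j}$ with $\sum a_j=d_n$ has some $a_j\ge d_n/(n-1)$; to force $a_j\ge d_j$ for \emph{some} $j$ one uses $d_n\ge 2$ together with a pigeonhole refinement, and to ensure the multinomial coefficients do not vanish one needs $\operatorname{char}k=0$ or $\operatorname{char}k$ larger than all the $d_j$ — handling the small-characteristic case (showing $d_j\ge 2$ still suffices, perhaps after a more careful apolarity argument, or identifying the exact obstruction) is the main obstacle and the step I would spend the most effort on. Once the claim holds, induction terminates with an independent subset $W$ of the surviving powers, and by construction $\phi(I)=(\ell_j^{d_j}:j\in W)=\phi(I_WS)$, so $W$ is a base and Theorem~\ref{base_SR}'s consequence applies verbatim to give $R/J\cong S_W/I_W$.
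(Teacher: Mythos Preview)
Your reduction to the case $\supp(I)=[n]$ is correct and matches the paper. The gap is in your inductive step: the claim that if $\ell_n=\sum_{j<n}c_j\ell_j$ then $\ell_n^{d_n}\in(\ell_1^{d_1},\dots,\ell_{n-1}^{d_{n-1}})$ is simply false at the level of generality you state it, and the pigeonhole argument cannot be rescued. Take $R=k[u,v]$ with $\chara k\neq 2$, $\ell_1=u$, $\ell_2=v$, $\ell_3=u+v$, and $d_1=d_2=d_3=2$. Then $\ell_3^2=u^2+2uv+v^2$, and $uv\notin(u^2,v^2)$, so $\ell_3^2\notin(\ell_1^2,\ell_2^2)$. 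More generally, in your multinomial expansion the term $\ell_1^{a_1}\cdots\ell_{n-1}^{a_{n-1}}$ with $a_j=\lfloor d_n/(n-1)\rfloor$ or $\lceil d_n/(n-1)\rceil$ need not have any $a_j\geq d_j$; there is no ``pigeonhole refinement'' that fixes this. The reason this specific configuration does not actually arise from a retract is the constraint $\phi(I)=I\cap R$, which you never invoke in proving the claim.

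The paper's argument exploits that constraint in a structural way you are missing. Since $\phi(I)=I\cap R\subseteq I$ as ideals of $S$, and $I$ is \emph{monomial}, the relation $\phi(x_j)^{d_j}\in I$ forces every monomial of $\phi(x_j)^{d_j}$ to lie in $I$. Ordering so that $d_n=\cdots=d_{t+1}=d>d_t\geq\cdots\geq d_1$, this shows $\phi(x_j)\in(x_1,\dots,x_t)$ for all $j\leq t$ (otherwise some $x_i^{d_j}$ with $d_j<d_i$ would lie in $I$). Hence the block $\{1,\dots,t\}$ is self-contained and handled by induction. For the top block $\{t+1,\dots,n\}$, the paper splits into two cases: if $d$ is not a power of $\chara k$ (or $\chara k=0$), a suitable binomial coefficient $\binom{d}{p^r}$ is nonzero and one deduces that each $\phi(x_i)$ with $i>t$ either equals $x_i$ modulo $(x_1,\dots,x_t)$ or misses $x_i$ entirely, which lets one either write down a base directly or drop a variable; if $d=p^r$ with $p=\chara k$, the Frobenius identity $(\sum c_j\ell_j)^{p^r}=\sum c_j^{p^r}\ell_j^{p^r}$ gives the needed containment for free. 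Your approach tries to bypass this case analysis by a uniform combinatorial argument on the $\ell_j$'s alone, but that information is too coarse: you must use that $I$ is monomial and that $\phi(I)\subseteq I$ to constrain the $\phi(x_j)$ themselves, not just the ideal they generate.
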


\begin{proof}
Without loss of generality, we may assume that $i_1=1,\ldots,i_s=s$ (and $d_1,\ldots,d_s\ge 2$). Hence the irreducible monomial ideal $I$ equals $(x_1^{d_1},\ldots,x_s^{d_s})$.

Let $R/J \hookrightarrow S/I$ be a graded algebra
retract with retraction map $S/I \to R/J$, where $R$ is a standard graded polynomial ring and $R/J$ is in minimal presentation form. Clearly we have an induced retract $R\hookrightarrow S$ with retraction map $\phi: S\to R$. As in Lemma \ref{induced_polynomial_retract}, we know that $J=\phi(I)=I\cap R$. Let $y_i:= \phi(x_i)$ where $i=1,\ldots,n$.

First we reduce the problem to the case $s=n$. Since $x_i \in \sqrt{I}$, we have $y_i \in
\sqrt{\phi(I)}\subseteq \sqrt{I}=(x_1,\ldots,x_s)$ for $i=1,\ldots,s$. Let $I'$ be the ideal $(x_1^{d_1},\ldots,x_s^{d_s})$ of $k[x_1,\ldots,x_s]$.

Choose a complement $k$-basis
$y_{i_1},\ldots,y_{i_p}$ for $\Span(y_1,\ldots,y_s)$ in $R_1$, where $s+1 \le i_1 <\cdots <i_p \le n$. Then we have
\[
R/J = k[y_1,\ldots,y_s]/\phi(I') \otimes_k k[y_{i_1},\ldots,y_{i_p}].
\]
Since $\phi(I) \subseteq I$, we also have $\phi(I') \subseteq I'$. Hence $k[y_1,\ldots,y_s]/\phi(I')$ is
 a $\Z$-graded algebra retract of $k[x_1,\ldots,x_s]/(x_1^{d_1},\ldots,x_s^{d_s})$. A base for this retract is also
a base for the original algebra retract $R/\phi(I) \to S/I$. Hence we can reduce the problem to the case $s=n$.

We proceed by induction on $n$. If $n=0$, there is nothing to do. Consider the case $n\ge 1$.

Without loss of generality, we may assume that $d_n=\ldots =d_{t+1}=d >d_t \ge \cdots \ge d_1$ (where $t\ge 0$). We will show by contradiction that
$\phi(x_j) \in (x_1,\ldots,x_t)$ for $j=1,\ldots,t$. Assume that this is not the case. Then for some $j\le t$, $t+1\le i \le n$, we have
\[
\phi(x_j)=cx_i + \sum_{\ell\neq i}b_{\ell}x_{\ell},
\]
where $c,b_{\ell}\in k, c\neq 0$. Since $x_j^{d_j}\in I$, we get $\phi(x_j)^{d_j} \in I$. In particular, $x_i^{d_j} \in I$ (since $I$ is a monomial ideal). This is a contradiction since $d_j<d_i=d$.

Let $I''$ be the ideal of $S''=k[x_1,\ldots,x_t]$ generated by the elements $x_1^{d_1},\ldots, x_t^{d_t}$. Let $R''=k[y_1,\ldots,y_t]$ be the symmetric algebra of the $k$-vector space $\Span(y_1,\ldots,y_t) \subseteq S''_1$. We have
$\phi(I'') \subseteq I''$, since from above $\phi(x_j) \subseteq (x_1,\ldots,x_t)$ for $j=1,\ldots,t$. In particular, $R''/\phi(I'')$ is
a graded algebra retract of $S''/I''$. Hence by induction hypothesis, this algebra retract possesses a base $U=\{i_1,\ldots,i_g\}\subseteq \{1,\ldots,t\}$, so that
\begin{enumerate}
\item $y_{i_1},\ldots,y_{i_g}$ are linearly independent over $k$;
\item $\phi(I'')=\phi(I''_US'')$.
\end{enumerate}
We consider the following two cases.

{\em Case 1}. If either $\chara k=0$ or $\chara k=p>0$ and $d$ is not a power of $p$, we prove the next claim.
\begin{claim*}
At least one of the following two situations occurs:
 \begin{enumerate}
\item for each $i=t+1,\ldots,n$, there exist $b_1,\ldots,b_t\in k$ such that $\phi(x_i)=x_i+\sum_{j=1}^{t}b_jx_j$ ,
\item for some $i\in \{t+1,\ldots,n\}$, we have $\phi(x_i)=\sum_{j\neq i}c_jx_j$ (where $c_j\in k$).
\end{enumerate}
\end{claim*}

Note that if the claim were proved, then we could finish the proof of the theorem. Indeed, assume that (i) is true. We can choose $W=\{i_1,\ldots,
i_g,t+1,\ldots,n\}$.

On the other hand, if (ii) is true, then for some $t+1\le i \le n$, $\phi(x_i)=\sum_{j\neq i}c_jx_j\in R$, and hence $\phi(x_i-\sum_{j\neq i}c_jx_j)=0$. This implies that
$R/J$ is a graded retract of $S/I+(x_i-\sum_{j\neq i}c_jx_j)$. Denote $W=[n]\setminus \{i\}$. We note that the latter ring is $S_W/I_W$.
Indeed, we only need to observe that $x_i^d\in I$ implies that $(\sum_{j\neq i}c_jx_j)^d \in I$, which in turn implies that
$(\sum_{j\neq i}c_jx_j)^d \in I_W$. Hence $S/I+(x_i-\sum_{j\neq i}c_jx_j)\cong S_W/I_W$.

Now since $R/J$ is a $\Z$-graded retract of $S_W/I_W$, we can apply the induction hypothesis and get the desired conclusion.

We are left with proving the claim. Assume that (ii) fails. There exists some $i \in [n]$ such that
\[
\phi(x_i)=cx_i + \sum_{j\neq i}c_jx_j,
\]
where $c,c_j \in k$, $c\neq 0$. First we prove by contradiction that $c_j=0$ for every $j\in \{t+1,\ldots,n\}\setminus \{i\}$.
Assume that this is not the case, so $c_j\neq 0$ for some $j\in \{t+1,\ldots,n\}\setminus \{i\}$.

If $\chara k=0$, since $x_i^d\in I$, we have $d(cx_i)^{d-1}c_jx_j \in I$. Hence
$x_i^{d-1}x_j\in I$, which is a contradiction.

If $\chara k=p>0$ and $d$ is not a power of $p$, we may write $d=hp^r$ where $r\ge 0$, $h\ge 2$ and $h$ is
not divisible by $p$. It is not hard to see that
\[
\binom{hp^r}{p^r}\equiv h \neq 0 ~ (\text{modulo $p$}).
\]
Hence since $\phi(x_i)^d\in I$,
\[
\binom{d}{p^r}(cx_i)^{p^r}(c_jx_j)^{d-p^r} \in I,
\]
so ${x_i}^{p^r}{x_j}^{d-p^r} \in I$. This is a contradiction.

Thus we have $\phi(x_i)=cx_i + \sum_{j=1}^tc_jx_j$. The right-hand side is in $R$, so apply $\phi$ again we have
\[
cx_i + \sum_{j=1}^tc_jx_j=c(cx_i + \sum_{j=1}^tc_jx_j)+\sum_{j=1}^tc_jy_j.
\]
Since $y_j\in (x_1,\ldots,x_t)$ for $j=1,\ldots,t$, we have $cx_i=c^2x_i$. Hence $c=1$, and thus
$\phi(x_i)=x_i+\sum_{j=1}^{t}c_jx_j$. Therefore (i) holds and the claim is true.

{\em Case 2}. In this case, $\chara k=p>0$, $d=p^r$ where $r\ge 1$. Choose a set $V\subseteq \{t+1,\ldots,n\}$ such that
$\{y_i:i\in U\cup V\}$ is a $k$-basis for $R_1$. We will show that $W=U\cup V$ satisfies two conditions of a base. The first
is obvious from the choice of $U$: $\{y_i:i\in W\}$ are linearly independent over $k$. For the second condition, let $i \in \{t+1,\ldots,n\}\setminus W$.  Then
\[
y_i=\sum_{j\in W}c_jy_j, ~\text{where $c_j\in k$ for each $j\in W$}.
\]
This implies that $y_i^d =\sum_{j\in W}c_j^dy_j^d \in I$. Thus $\phi(I)=\phi(I_WS)$. The proof of the theorem is now complete.
\end{proof}

The following result also supports Conjecture \ref{retracts_of_monomial_quotient_rings}.

\begin{prop}
\label{power_of_linear_ideals}
If $I=(x_{i_1},\ldots,x_{i_t})^d$ where $d\ge 2$, then Conjecture \ref{retracts_of_monomial_quotient_rings} is true.
\end{prop}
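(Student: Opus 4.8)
The plan is to run the opening reductions of the proof of Theorem~\ref{irreducible_monomial_quotients} — passing to an induced retract of polynomial rings — and then to exploit the fact that the $d$-th power of an ideal generated by linear forms depends only on the span of those forms.

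After relabeling the variables we may assume $I=(x_1,\dots,x_t)^d$ with $d\ge 2$. Let $R/J\hookrightarrow S/I$ be a graded algebra retract, where $R/J$ is written in minimal presentation form, so $J\subseteq\mm^2$ with $\mm=R_+$; since $d\ge 2$ we also have $I\subseteq\nn^2$, so Lemma~\ref{induced_polynomial_retract} produces an induced graded algebra retract $R\hookrightarrow S$ of polynomial rings with retraction $\phi\colon S\to R$ satisfying $\phi(I)=J=I\cap R$. Write $y_i:=\phi(x_i)\in R_1$ for $i=1,\dots,t$. I would first record two elementary facts. (a) Since the minimal monomial generators of $I$ are the degree-$d$ monomials in $x_1,\dots,x_t$, we have $\supp(I)=\{1,\dots,t\}$, and for every $W\subseteq\{1,\dots,t\}$ the ideal $I_W$ equals $(x_i:i\in W)^d$ inside $S_W$, hence $I_WS=(x_i:i\in W)^dS$. (b) Because $\phi$ is a surjective ring homomorphism it sends the ideal generated by a set of elements to the ideal generated by their images; applied to the generators in (a) this yields $\phi(I_WS)=(y_i:i\in W)^dR$ for every such $W$, and in particular $\phi(I)=(y_1,\dots,y_t)^dR$.

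The main step is then to choose the base. Since $y_1,\dots,y_t$ span the subspace $U:=\Span_k(y_1,\dots,y_t)\subseteq R_1$, we may pick $W\subseteq\{1,\dots,t\}$ so that $\{y_i:i\in W\}$ is a $k$-basis of $U$. Condition~(i) of Definition~\ref{defn_base} then holds by construction. For condition~(ii), observe that $\{y_i:i\in W\}$ and $\{y_1,\dots,y_t\}$ span the same subspace $U$, hence generate the same ideal of $R$; raising that ideal to the $d$-th power gives $(y_i:i\in W)^dR=(y_1,\dots,y_t)^dR$, i.e.\ $\phi(I_WS)=\phi(I)$ by fact~(b). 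Thus $W$ is a base of the retract, which is the assertion of Conjecture~\ref{retracts_of_monomial_quotient_rings} for this $I$. Unlike Theorem~\ref{irreducible_monomial_quotients}, this argument uses nothing about $\chara k$.

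There is no real obstacle here; the statement falls out once the induced polynomial retract is in place. The only points needing a little care are the bookkeeping around Lemma~\ref{induced_polynomial_retract} (that $\phi(I)=I\cap R$, and that $\supp(I)$, the $I_W$ and $\phi(I_WS)$ behave as described), together with the observation that a power of a linear ideal is determined by the underlying subspace of linear forms. If one additionally wants the sharper conclusion $R/J\cong S_W/I_W$ recorded in the Remark after Conjecture~\ref{retracts_of_monomial_quotient_rings}, one simply enlarges $W$ to a subset of $[n]$ for which $\{y_i:i\in W\}$ is a basis of all of $R_1$, exactly as in the ``simple observation'' preceding Theorem~\ref{base_SR}.
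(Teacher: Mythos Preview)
Your proof is correct and follows essentially the same idea as the paper's: both use that $\phi(I)=(\phi(x_1),\dots,\phi(x_t))^d$ and then pick $W$ so that the $\phi(x_i)$ with $i\in W$ form a basis of the span. The only difference is cosmetic---the paper first reduces (as in the opening of Theorem~\ref{irreducible_monomial_quotients}) to the case $t=n$ and then takes $W\subseteq[n]$ with $\{\phi(x_i):i\in W\}$ a basis of all of $R_1$, whereas you work directly with $W\subseteq\{1,\dots,t\}$ and a basis of $U=\Span_k(y_1,\dots,y_t)$; your version is marginally more streamlined but the substance is identical.
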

\begin{proof}
Of course it is harmless to assume that $i_1=1,\ldots,i_t=t$. Similar to the proof of Theorem \ref{irreducible_monomial_quotients}, we can reduce
to the case $I=(x_1,\ldots,x_n)^d$.

Assume that $R/J \hookrightarrow S/I$ is a $\Z$-graded algebra retract with the retraction map $S/I \to R/J$, where $R$ is a standard graded
polynomial ring, $R/J$ is in minimal presentation form. Let $R\hookrightarrow S$ be the induced algebra retract and $\phi: S\to R$ the retraction
map.

Choose $W\subseteq [n]$ such that $\{\phi(x_i):i\in W \}$ is a $k$-basis of $R_1$. It is easy to see that
$\phi(I)=(\phi(x_i):i\in W)^d.$ This concludes the proof of the proposition.
\end{proof}

\section*{Acknowledgments}
We are grateful to Aldo Conca, Winfried Bruns and Tim R\"omer for their helpful comments and suggestions. We would like to thank Holger Brenner for his critical comments on the writing.

We would like to thank the referee for his/her thorough reading and many thoughtful suggestions that helped us to improve the exposition.

\end{document}